\numberwithin{equation}{section}
\newcommand{\revjj}[1]{\textcolor{black}{{#1}}}
\newcommand{\revj}[1]{\textcolor{black}{{#1}}}
\newcommand{\revm}[1]{\textcolor{black}{{#1}}}
\newcommand{\dO}{\partial\Omega}
\newcommand{\dive}{\operatorname{div}}
\newcommand{\jj}{\mathbf{j}_h}
\newcommand{\T}{\mathcal{T}}
\newtheorem{theorem}{Theorem}
\newtheorem{lemma}{Lemma}
\newtheorem{proposition}{Proposition}
\theoremstyle{definition}
\newtheorem{assumption}{Assumption}
\newtheorem{corollary}{Corollary}
\newtheorem{remark}{Remark}
\begin{document}
%  -------   TITLE   ------    %
\title[Inf-Sup stability of unfitted Stokes  elements]{Inf-sup stability of geometrically unfitted Stokes finite elements}

%  -------   AUTHORS   -------   %

\author[J. Guzm\'an]{Johnny Guzm\'an\textsuperscript{\textdagger}}
\address{\textsuperscript{\textdagger} Division of Applied Mathematics, Brown University, Providence, RI 02912, USA}
\email{johnny\_guzman@brown.edu}
\thanks{Partially supported by NSF through the Division of Mathematical Sciences grant  1318108.}

\author[M.A. Olshanskii]{Maxim Olshanskii\textsuperscript{\textdaggerdbl}}
\address{\textsuperscript{\textdaggerdbl}Department of Mathematics, University of Houston, Houston, TX 77204, USA}
\email{molshan@math.uh.edu}
\email{}
\thanks{Partially supported by NSF through the Division of Mathematical Sciences grant  1522252.}

\maketitle

\begin{abstract}  The paper shows an inf-sup stability property for several well-known 2D and 3D Stokes elements on  triangulations which are not fitted to a given smooth or polygonal domain. The property implies stability
and optimal error estimates for a class of unfitted finite element methods for the Stokes and Stokes interface
problems, such as Nitsche-XFEM or cutFEM. The error analysis is presented for the Stokes problem. All assumptions made in the paper are satisfied once the background mesh is shape-regular and fine enough.
\end{abstract}
\medskip

%   ------- KEYWORDS   ------  %
\keywords{XFEM, cutFEM, Stokes problem, LBB condition, finite elements}
\smallskip

%   ------   SUBJECT CLASS   ------   %
\subjclass[2010]{65N30, 65N12, 76D07, 65N85}
\date{}

% ---------------------------------------------- %
\section{Introduction}\label{section_intro}
% ---------------------------------------------- %
Unfitted finite element (FE) methods incorporate geometrical information about the domain where the problem is posed
without fitting the mesh to lower dimensional structures such as physical boundaries or internal interfaces.
This is opposite to fitted desretizations such as (isoparametric) traditional FE and isogeometric analysis. The advantage of the unfitted approach is a relative ease of handling propagating interface and geometries defined implicitly, i.e. when a surface
parametrization is not readily available. Prominent classes of unfitted FE are given by XFEM \cite{fries2010extended} and cutFEM \cite{cutFEM} also known as Nitsche-XFEM methods or trace FE in the case of embedded surfaces. In cutFEM, one considers background mesh and FE spaces not tailored to the problem geometry, while numerical integration in FE bilinear forms is performed  over the physical domains $\Omega$ and/or $\dO$ which cut through the background mesh in an arbitrary way.
Effectively, this leads to traces of the ambient FE spaces on the physical domain, where the original problem is posed, and integration over arbitrary cut simplexes.

The idea of  unfitted FE can be followed back at least to the works of Barrett and Elliott \cite{barrett1984finite,barrett1986finite, barrett1987fitted}, where a cut FE method was studied for the planar elliptic problems and elliptic interface problems.
Over the last decades, unfitted FE methods  emerge in a powerful discretization  approach  that has been applying  to the wide range of problems, including problems with interfaces, fluid equations, PDEs posed on surfaces, surface-bulk coupled problems, equations posed on evolving domains, etc, see, e.g., \cite{becker2009nitsche,boffi2003finite,burman2012fictitious,deckelnick2014unfitted,dolbow2009efficient,traceFEM, li1998immersed,li2003new,olshanskii2014eulerian,reusken2014analysis,schott2014new}. %In particular, the unfitted finite elements were recently applied to solving equations posed on evolving domains within purely Eulerian framework, see, e.g., \cite{olshanskii2014eulerian,hansbo2016cut}.
Among important enabling techniques used in unfitted FEM are the Nitsche method for enforcing essential boundary and interface conditions \cite{hansbo2002unfitted}, ghost penalty stabilization \cite{burman2010ghost}, and the properties of trace FE spaces on embedded surfaces~\cite{ORG09}.  We note that many of these developments are accomplished with rigorous stability and convergence analysis of the unfitted FE, which demonstrate both utility and reliability of the approach.

One important application of unfitted FE methods is the numerical simulation of fluid problems with evolving interfaces as
occurs in fluid-structure interaction problems and two-phase flows. If the fluid is treated as incompressible, then the prototypical model suitable for numerical analysis is the stationary (interface) Stokes problem. This paper addresses the question of numerical stability of a certain class of geometrically unfitted Stokes finite elements. Unfitted FE methods for the Stokes problem received recently a closer attention in the literature. In \cite{burman2014fictitious}
optimal order convergence results were shown for the unfitted inf-sup stable velocity--pressure 2D FE with Nitsche treatment of the boundary conditions and ghost-penalty stabilization for triangles cut by $\dO$. This analysis was extended to the Stokes interface problem and $P_1isoP_2-P_1$ elements in \cite{hansbo2014cut}. Optimal order convergence in the energy norm for $P_1^{\rm bubble}-P_1$ unfitted FE using slightly different pressure stabilization
over cut triangles was shown for the Stokes interface problem in \cite{cattaneo2015stabilized}. In  \cite{Reusken2016} the $P_2-P_1$ elements were analysed for the  Stokes interface problem, when the pressure element is enriched to allow for the jump over unfitted interface, while the velocity element is globally continuous.
Globally stabilized unfitted Stokes finite elements, $P_1-P_1$ and $P_1-P_{1}^{\rm disc}$, were studied in \cite{cattaneo2015stabilized,massing2014overlapping,wang2015new}. Other related work
on geometrically unfitted FE for the Stokes problem can be found in \cite{amdouni2012numerical,gross2007extended,legrain2008stability,sauerland2011extended}.

The analysis of inf-sup stable unfitted Stokes elements, however, is not a straightforward extension of the standard results for saddle point problems. In particular, it essentially relies on a certain uniform stability property of the finite element velocity--pressure pair. This property can be found as an assumption (explicitly or implicitly made) in  \cite{burman2014fictitious,cattaneo2015stabilized,hansbo2014cut}. Loosely speaking the following condition on  FE velocity--pressure spaces is required: Assume a family of shape-regular triangulations $\{\T_h\}_{h>0}$ of $\mathbb{R}^2$, and let $\Omega\in \mathbb{R}^2$ be a bounded domain with smooth boundary. Consider
the family of domains $\Omega_h$, where each $\Omega_h$ consists of all triangles from $\T_h$ which are \textit{strictly inside} $\Omega$. Then one requires that the LBB  constants (optimal constants from the FE velocity--pressure inf-sup stability condition) for the domains $\Omega_h$ are uniformly in $h$ bounded away from zero. In the same way the property is formulated in 3D. In section~\ref{section1} we discuss what sort of difficulties one encounters trying to employ common techniques to verify this property.

Recently, in \cite{Reusken2016} the required uniform stability condition was proved for $P_2-P_1$, the lowest order Taylor-Hood element. In this paper, we show the uniform inf-sup stability result for a wider class of elements,
including $P_{k+1}-P_k$, $k\ge 1$, and   $P_{k+d}-P_{k}^{\rm disc}$ for $k\ge0$, $\Omega\subset\mathbb{R}^d$, $d=2,3$, and several other elements, see Section~\ref{s_spaces}.
Following~\cite{Reusken2016} we employ the argument  from \cite{verfurth1984error}. This helps us to formulate  more local condition on FE spaces which are sufficient for the uniform inf-sup stability, but easier to check. Further we show that
this condition is satisfied by a number of popular LBB-stable FE pairs.

The paper also applies the acquired uniform stability result to show the optimal order error estimates of
the unfitted FE method for the Stokes problem. The analysis improves over the available in the literature by
eliminating certain assumptions on how the surface $\dO$ (or an interface in the two-phase fluid case) intersects
the background mesh. Instead, we impose certain assumptions, which are always satisfied once the background mesh is shape-regular and the mesh size is not too coarse with respect to the problem geometry, see section~\ref{s_infsup}  for the assumptions and further discussion in Remark~\ref{R_d1}.

The remainder of the paper is organized as follows. In section~\ref{section1} we define the problem of interest and formulate the
central question we address in this paper about uniform inf-sup stability. Section~\ref{section2} collects necessary preliminaries and auxiliary results. Here we present the unfitted finite element method for the Stokes problem. Further we formulate assumptions sufficient for the main uniform stability result. Section~\ref{s_error} shows how the well-posedness and optimal order error
estimates for the unfitted FE method follow from our assumptions.  In section~\ref{s_spaces} we give the examples of velocity and pressure spaces satisfying the assumptions.

% ---------------------------------------------- %
\section{Problem setting}\label{section1}
% ---------------------------------------------- %
Consider the Stokes problem posed on a bounded  domain with Lipschitz boundary $\Omega\subset \mathbb{R}^d$, $d=2,3$,
\begin{equation}\label{Stokes}
\left\{
\begin{aligned}
    -\Delta u+ \nabla p &= f \qquad &\mbox{in } &\Omega, \\
                                           \dive u &=0            & \mbox{in } &\Omega, \\
                            u&=0            & \mbox{on }& \partial \Omega.
\end{aligned}
\right.
\end{equation}
Vector function $u\in \left[H^1_0(\Omega)\right]^d$ and $p\in L^2(\Omega)/\mathbb{R}$ are the weak solution to \eqref{Stokes}, having the physical meaning of fluid velocity and normalized kinematic pressure.

Assume there is a domain $S\supset\Omega$, and we let $\{\mathcal{T}_h\}_{h>0}$ be an admissible family of triangulations of $S$. We are interested in a finite element method for \eqref{Stokes} using spaces of piecewise polynomial functions with
respect to $\T_h$. Note that we make no assumption on how $\Omega$ overlaps with $\T_h$, i.e. $\dO$ may cut through tetrahedra or triangles from $\T_h$ in an arbitrary way.

In the next section we give details of the finite element method. Now we formulate the stability condition, which is crucial for the analysis of this method (and likely many other unfitted FE methods for \eqref{Stokes}).
Consider the set of all strictly internal simplexes and define the corresponding subdomain of $\Omega$:
\[
\mathcal{T}_h^{i} := \{T\in \mathcal{T}_h: T \subset \Omega \},\qquad \Omega_h^{i}  := \mbox{Int} \Big(\bigcup_{T\in\mathcal{T}_h^{i}} \overline{T}\Big).
\]
For background finite element velocity and pressure spaces $V_h$ and $Q_h$, consider their restrictions on $\Omega_h^i$, that is $V_h^i=V_h \cap \left[H_0^1(\Omega_h^i)\right]^d$ and $Q_h^i=Q_h\cap L^2_0(\Omega_h^i)$, $L^2_0(\Omega_h^i):=\{q\in L^2(\Omega_h^i)~:~\int_{\Omega_h^i} q\,dx=0\}$, and define
\[
\theta_h:=\inf_{q \in Q_h^i}\sup_{v \in V_h^i} \frac{\revm{\int_{\Omega_h^i} q\dive v \,dx}}{\|v\|_{H^1(\Omega_h^i)}\|q\|_{L^2(\Omega_h^i)}}.
\]
\textit{We are interested in the following condition}:
\begin{equation}\label{LBBi}
0<\inf\limits_{h<h_0} \theta_h,
\end{equation}
for some positive $h_0$.

Note that standard arguments based on the Ne\v{c}as inequality and Fortin's projection operator, cf. \cite{brezzi2008mixed}, cannot be applied in a straightforward way to yield  \eqref{LBBi}  for inf-sup stable elements  (e.g., for Taylor-Hood element).
For the reference purpose recall the Ne\v{c}as inequality:
\begin{equation}\label{Necas}
C_N(\Omega_h^i)\|q\|_{L^2(\Omega_h^i)}\le \sup_{v \in\left[H_0^1(\Omega_h^i)\right]^d} \frac{\revm{\int_{\Omega_h^i} q\dive v \,dx}}{\|v\|_{H^1(\Omega_h^i)}}\quad\forall~q\in L^2_0(\Omega_h^i).
\end{equation}
Since $\Omega_h^i$ is Lipschitz for any given $\T_h$, the inequality holds with some domain dependent constant $C_N(\Omega_h^i)>0$, see, e.g., \cite{bogovskii1979solution,galdi2011introduction}.
However, we are not aware of a result in the literature which implies that $C_N(\Omega_h)$ are uniformly  bounded from below by a positive constant independent of $h$. For example, the well-known argument for proving \eqref{Necas} is based on the decomposition of a Lipschitz domain into a finite number of strictly star shaped domains (see Lemma~II.1.3 in \cite{galdi2011introduction}) and applying the result of Bogovskii~\cite{bogovskii1979solution} in each of the star domains.
However, the number of the star domains in the decomposition of $\Omega_h^i$ may infinitely grow for $h\to0$
even if $\dO$ is smooth and $\{\T_h\}_{h>0}$ is shape-regular, which would drive the lower bound for $C_N(\Omega_h^i)$ to zero. Alternatively, the recent analysis from \cite{bernardi2015continuity} provides a lower bound for
$C_N(\Omega_h^i)$ if there exist diffeomorphisms $\Phi_h\,:\,\Omega_h^i\to \Omega$  with uniformly bounded
$W^{1,\infty}(\Omega_h^i)$ norms. We do not see how to construct such diffeomorphisms (note that $\dO_h^i$
is not necessarily a graph of a function in the natural coordinates of $\dO$). Additional difficulty stems from the observation that $\T_h^i$ does not necessarily inherit a macro-element structure that $\T_h$ may possess.  This said, we shall look for a different approach to verify \eqref{LBBi}.

We end this section noting that the finite element method and the analysis of the paper can be easily extended to the Stokes \textit{interface} problem, a prototypical model of two-phase incompressible fluid flow. However, we are not adding these extra details to the present report.

% ---------------------------------------------- %
\section{Finite element method}\label{section2}
% ---------------------------------------------- %

\subsection{Preliminaries}
 We adopt the convention that elements $T$ and element edges (also faces in 3D) are open sets. We use over-line symbol to refer to their closure. For each simplex $T\in \mathcal{T}_h$, let $h_T$ denote its diameter and define the global parameter of the triangulation by $h = \max_{T} h_T$. We assume that $\mathcal{T}_h$ is shape regular, i.e. there exists $\kappa>0$ such that for every $T\in \mathcal{T}_h$  the radius $\rho_T$ of its inscribed sphere satisfies
\begin{equation}\label{shaperegularity}
\rho_T>h_T/\kappa.
\end{equation}

The set of elements cutting the interface $\Gamma \equiv \partial \Omega$, and restricted to $\Omega$  are also of interest. They are defined by:
\begin{align*}
%\mathcal{T}_h^{i}    &:= \{T\in \mathcal{T}_h: T \subset \Omega \},\\
\mathcal{T}_h^{\Gamma} &:= \{T\in \mathcal{T}_h: \revm{\mbox{meas}_2(T \cap \Gamma)>0}  \},\\
\mathcal{T}_h^{e}&:=\{ T \in  \mathcal{T}_h: T \in  \mathcal{T}_h^{i} \text{ or } T \in  \mathcal{T}_h^{\Gamma} \}.
\end{align*}

In particular for $T \in \mathcal{T}_h^{\Gamma}$ we denote  $T_{\Gamma} =\overline{T} \cap \Gamma$. Observe that the definition of $\mathcal{T}_h^{\Gamma}$ guarantees that $\sum_{T\in\mathcal{T}_h^{\Gamma}}|T_\Gamma| = |\Gamma|$. Under these definitions we define the \revm{$h$-dependent} domains
\[
%\Omega_h^{i}  := \mbox{Int} \Big(\bigcup_{T\in\mathcal{T}_h^{i}} \overline{T}\Big),\quad
\Omega_h^{\Gamma}  := \mbox{Int} \Big(\bigcup_{T\in\mathcal{T}_h^{\Gamma}} \overline{T}\Big), \quad
\Omega_h^{e}  := \mbox{Int} \Big(\bigcup_{T\in\mathcal{T}_h^{e}} \overline{T}\Big).
\]
Note that \revm{$\overline{\Omega_h^e}=\overline{\Omega_h^i} \cup \overline{\Omega_h^\Gamma}$} and that $\Omega_h^i \subset \Omega \subset \Omega_h^e$.
For these domains define sets of faces:
\begin{align*}
\mathcal{F}_h^{i} & := \{ F: F \text{ is an interior face of } \mathcal{T}_h^i \}, \\
\mathcal{F}_h^{\Gamma}  & := \{ F: F \text{ is a face of } \mathcal{T}_h^\Gamma, F \not\subset \partial \Omega_h^e  \} ,    \\
\mathcal{F}_h^{e} & := \{ F: F \text{ is an interior face of } \mathcal{T}_h^e \}.
\end{align*}

Now we can define finite element spaces.
A space of continuous functions on $\Omega_h^{e}$ which are polynomials of degree $k$ on each $T\in \mathcal{T}_h^{e}$ is denoted by $W^k_h$.
The spaces of discontinuous and continuous pressure spaces are given by
\begin{align*}
Q_h^{\rm disc}&=\{ q \in L^2(\Omega_h^e): q|_T \in P^{k_p}(T), \forall~ T \in \mathcal{T}_h^e\}, \\
Q_h^{\rm cont}&=Q_h^{\rm disc} \cap H^1(\Omega_h^e) .
\end{align*}
Throughout this paper we will consider either $Q_h=Q_h^{\rm disc}$ (for $k_p \ge 0$) or $Q_h=Q_h^{\rm cont}$ (for $k_p \ge 1$).  We will denote the finite element velocity space by $V_h \subset [H^1(\Omega_h^e)]^d$, and  we will assume %\footnote{\bf I introduced $k_p$ and $k_u$, since MINI and P2isoP1-P1 elements didn't fit the framework}
\[ \big(W_h^{k_u}\big)^d \subset V_h \subset\big( W_h^{s}\big)^d
\]
for some integer $s\ge k_u\ge1$.
In section~\ref{s_infsup}, we introduce a more technical assumption~\ref{assumption2} that our pair of spaces $\{Q_h,V_h\}$ has to satisfy. Then, later we give examples of pairs that satisfy all necessary assumptions.  For example, if $Q_h=Q_h^{\rm disc}$ then $V_h$ can be the space of continuous piecewise polynomials of degree $k_p+d$; and if $Q_h=Q_h^{\rm cont}$ then $V_h$ can be the space of continuous piecewise polynomials of degree $k_p+1$. We give more examples of spaces satisfying our assumptions in section~\ref{s_spaces}.

\subsection{Finite element method} We will use the notation $(v,w)=\int_{\Omega} v w\, dx$. Introduce the mesh-dependent bilinear forms
\[
a_h(u_h, v_h):=(\nabla u_h, \nabla v_h)+s_h(u_h, v_h)+ \jj(u_h, v_h) +\eta j_h(u_h, v_h),
\]
with
\begin{align*}
s_h(u,v)&=-\int_{\Gamma} \{(n\cdot\nabla u) \cdot v + (n\cdot\nabla v) \cdot u\} ds,\\
j_h(u,v)&=  \sum_{T \in \mathcal{T}_h^\Gamma } \frac{1}{h_T} \int_{T_{\Gamma}} u \cdot v ds,\\
\jj(u,v)&=\sum_{F \in \mathcal{F}_h^\Gamma} \sum_{\ell = 1}^s h_F^{2 \ell-1} \int_{F}  \left[\partial_{n}^\ell u\right ] \left[\partial_{n}^\ell v\right] ds;
\end{align*}
and
\[
b_h(p_h, v_h):= -(p_h, \dive v_h)+r_h(p_h, v_h),
\]
with
\[
r_h(p,v)=\int_{\Gamma} p \, v \cdot n\, ds.
\]
Here and further $\partial_n^\ell q$  on face $F$ denotes the derivative of order $\ell$ of $q$ in direction $n$, where $n$ is normal to $F$; and
$[\phi]$ denotes the jump of a quantity $\phi$ over a face $F$.

We can now define the numerical method:
Find $u_h \in V_h$ and $p_h \in Q_h$ such that
\begin{equation}\label{FEM}
\left\{
\begin{aligned}
a_h(u_h, v_h)+b_h(p_h, v_h)&= (f, v_h), \\
b_h(q_h, u_h) - J_h (p_h, q_h) &=0,
\end{aligned}
\right.
\end{equation}
for all $v_h\in V_h,~q_h\in Q_h,$ where
\[
J_h(q,p)=\sum_{F \in \mathcal{F}_h^\Gamma} \sum_{\ell = 0}^{k_p} h_F^{1+2 \ell} \int_{F}  \left[\partial_{n}^\ell q\right ] \left[\partial_{n}^\ell p\right] ds.
\]
 The unfitted FE method in \eqref{FEM} was introduced in \cite{burman2014fictitious}.

 Pressure solutions to both \eqref{Stokes} and \eqref{FEM} are defined up to an additive constant. It is convenient to assume that the restriction of $p_h$ on $\Omega_h^i$ is from  $L^2_0(\Omega_h^i)$. We shall fix one particular $p$ solving \eqref{Stokes} later.

Before  proceeding with the analysis, we briefly discuss the role of different terms in the finite element formulation \eqref{FEM}.
First note that all \revm{volume} integrals in \eqref{FEM} are computed over physical domains $\Omega$ and $\Gamma$ rather than computational domain $\Omega_h^e$.
The gradient and div-terms appear due to the integration by parts in a standard weak formulation of the Stokes problem. Since finite element
velocity trial and test functions do not satisfy homogenous Dirichlet conditions strongly on $\Gamma$,  the integration by parts brings the $s_h$ and $r_h$ terms to the formulation. The $-\int_{\Gamma} (n\cdot\nabla v) \cdot u ds$ integral in $s_h$ is added to make formulation symmetric. It vanishes for $u$, the Stokes equations solution. The same is true for the $r_h$ term in the continuity equation in \eqref{FEM}.  The penalty term $j_h(u_h, v_h)$ weakly enforces the Dirichlet boundary conditions for $u_h$, as common for the Nitsche method, with a parameter $\eta=O(1)$. The terms $\jj(u_h, v_h)$ and $J_h (p_h, q_h)$ are added for the numerical stability of the method: we need  $\jj(u_h, v_h)$ to gain control over normal velocity derivatives in $s_h$, and we need $J_h$ for pressure stability over cut triangles. In practice, both  $\jj$ and $J_h$ can be scaled by additional stabilization parameters of $O(1)$ order; we omit this detail here.

We note that the unfitted FEM analyzed in the paper is closely related to the extended finite element method (XFEM). Indeed, the trace space of background finite element functions on the domain $\Omega$ can be alternatively described as a FE space spanned over nodal shape functions from $\Omega$ and further enriched by certain  degrees of freedom tailored to $\dO$. Hence the results of this paper can be as well considered as the analysis of a certain class of XFEM methods for the Stokes problem.

Next section proves the key result for getting numerical stability and optimal order error estimates for the unfitted finite element method \eqref{FEM}.
%Note that if $Q_h=Q_h^{\rm cont}$ then
%\begin{equation*}
%\sum_{F \in \mathcal{F}_h^\Gamma} \sum_{\ell =0}^n h_F^{1+2 \ell} \int_{F}  \left[\partial_{n}^\ell q\right ] \left[\partial_{n}^\ell p\right] ds=\sum_{F \in \mathcal{F}_h^\Gamma} \sum_{\ell = 1}^n h_F^{1+2 \ell} \int_{F}  \left[\partial_{n}^\ell q\right ] \left[\partial_{n}^\ell p\right] ds.
%\end{equation*}

\section{Stability} \label{s_infsup}
We need to define some norms and semi-norms.   First we define the mesh-dependent norm for the velocity
\begin{equation*}
\|u\|_{V_h}^2=|u|_{H^1(\Omega)}^2+ j_h(u,u)+\jj(u,u).
\end{equation*}
Note that due to the boundary term $j_h$, the functional $\|u\|_{V_h}$  defines a norm on $V_h$ equivalent to the $H^1(\Omega)$ norm, $\|u\|_{H^1(\Omega)}\lesssim\|u\|_{V_h}\lesssim \revm{h^{-1}_{\rm min}}\|u\|_{H^1(\Omega)}$, \revm{$h_{\rm min}=\min_{T\in\T_h^e}h_T$}.
We need a set of all tetrahedra intersected by $\Gamma$ together with all tetrahedra from $\Omega$ touching those:
\begin{equation*}
\widetilde{\mathcal{T}}_h^{\Gamma}=\{ T: T \in \mathcal{T}_h^{\Gamma}  \text{ or } T \subset \Omega,  \overline{T} \cap \overline{\Omega_h^\Gamma } \neq \emptyset \},
\end{equation*}
and also
\begin{equation*}
\widetilde{\Omega}_h^{\Gamma}  := \mbox{Int} \Big(\bigcup_{T\in\widetilde{\mathcal{T}}_h^{\Gamma}} \overline{T}\Big). \\
\end{equation*}

For a generic set of tetrahedra $\T\subset\T_h$ denote $\omega(\T)\subset\T_h$ the set of all tetrahedra having at least one vertex in $\T$.
We need the following assumptions on how well the geometry is resolved by the mesh.

\begin{assumption}\label{assumption1}  For any $T\in \mathcal{T}_h^\Gamma$ we assume that the set $W(T)=\mathcal{T}_h^i \cap \omega \left(\omega(T)\right)$ is not empty.
\end{assumption}

We note that the assumption can be weaken by allowing in $W(T)$ neighbors of $T$  of degree $L$, with some finite and mesh independent $L\ge2$.

Given $T \in \mathcal{T}_h^\Gamma$ we associate an arbitrary but fixed $K_T \in W(T)$, \revm{which can be reached from $T$ by crossing  faces in $\mathcal{F}_h^\Gamma$.  More precisely,  there exists simplices $T=K_1, K_2, \ldots, K_M = K_T$ with $K_j \in \mathcal{T}_h^\Gamma$ for $j<M$.} The number $M$ is uniformly bounded and only depends on the shape regularity of the mesh. Note that by \eqref{shaperegularity} there exists a constant $c$ only depending on the shape regularity constant $\kappa$ such that $ \frac{1}{c}  h_T \le h_{K_T} \le c h_T$. \revjj{For $T \in \mathcal{T}_h^i$ we define $K_T= T$.}

\revjj{
\begin{assumption}\label{assumption1b}
Let $F \in \mathcal{F}_h^\Gamma$ with $F= \partial T_1 \cap \partial T_2$. We assume $K_{T_2}$ can be reached from $K_{T_1}$ by crossing a finite, independent of $h$, number of faces of tetrahedra from $\mathcal{T}_h^{i}$.
\end{assumption}
We recall that we assume that $\Omega$  is Lipschitz. %We emphasize this property ina separate assumption.
}

\begin{remark}\rm\label{R_d1}
One can check that the assumptions~\ref{assumption1}--\ref{assumption1b} are  satisfied if $h$ is sufficiently small and the minimal angle condition \eqref{shaperegularity} holds. This is an improvement of the available analysis of unfitted finite elements which commonly imposes a further restriction on how interface intersects $\T_h$. In 2D this extra assumption is formulated as \revjj{follows}: $\dO$ does not
intersect any edge from $\mathcal{F}^e_h$ more than one time, see, e.g. \cite{hansbo2002unfitted}.
An analogous restriction was commonly assumed in 3D. One easily builds an example showing that this extra assumption is not necessarily true for arbitrary fine
mesh and smooth $\dO$, while enforcing it by `eliminating' \revj{ineligible} elements introduces $O(h^2)$ geometrical
error diminishing possible benefits of using higher order elements.  We do not need this extra assumption.

The assumptions \ref{assumption1}--\ref{assumption1b} also allow local mesh refinement.
\end{remark}

We will make use of the following well known scaled trace inequality.
\begin{equation}\label{oldtrace}
\|v\|_{L^2(\partial T)}\le C(h_T^{-\frac12}\|v\|_{L^2(T)}+h_T^{\frac12}\|\nabla v\|_{L^2(T)}),\quad ~~\forall~v\in H^1(T).\end{equation}

We will also need a local trace inequality for parts of $\Gamma$. We give the proof of the result only assuming that the boundary is Lipschitz in the appendix.  Under various stronger assumptions the following result  was proved in \cite{hansbo2002unfitted,hansbo2004finite,chernyshenko2013non,reusken2014analysis}.

\begin{lemma}\label{lemmatrace}
Under assumption that $\Omega$  is Lipschitz we have the following inequality for every  $T\in \mathcal{T}_h^{\Gamma}$
\begin{equation}\label{trace}
\|v\|_{L^2(T\cap\Gamma)}\le C(h_T^{-\frac12}\|v\|_{L^2(T)}+h_T^{\frac12}\|\nabla v\|_{L^2(T)}),\quad ~~\forall~v\in H^1(T),
\end{equation}
with a constant $C$ independent of $v$, $T$,  how $\Gamma$ intersects $T$, and $h<h_0$ for some arbitrary but fixed $h_0$.
\end{lemma}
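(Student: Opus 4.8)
The plan is to reduce \eqref{trace} to a Gauss--Green (divergence theorem) identity on $T\cap\Omega$, after which the face contribution is handled by the standard scaled trace inequality \eqref{oldtrace} and the volume contribution by Young's inequality. The only place where the Lipschitz hypothesis on $\Omega$ really enters is a preliminary localization step.

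\emph{Localization.} Since $\Omega$ is Lipschitz, it admits a finite atlas of charts in each of which, after a rigid rotation, $\Gamma$ is the graph $x_d=\phi(x')$ of a Lipschitz function with Lipschitz constant bounded by some $L$ depending only on $\Omega$. The charts form an open cover of the compact set $\Gamma$, so they possess a Lebesgue number $r_0>0$: every ball of radius $r_0$ centered at a point of $\Gamma$ lies in a single chart. Choosing $h_0$ small enough that $h_T\le h<h_0$ forces $\operatorname{diam}T<r_0$ for every $T\in\mathcal{T}_h^\Gamma$; picking $p\in\overline T\cap\Gamma$ gives $T\subset B(p,\operatorname{diam}T)\subset B(p,r_0)$, so $T$ sits inside one chart. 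Applying the associated rotation (an isometry, hence preserving $\|\cdot\|_{L^2(T)}$ and $\|\nabla(\cdot)\|_{L^2(T)}$, and possibly reflecting $x_d\mapsto -x_d$) we may assume $T\cap\Gamma=\{(x',\phi(x')):x'\in D\}$ and $T\cap\Omega=T\cap\{x_d<\phi(x')\}$, with $|\nabla\phi|\le L$ on the projection $D$.

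\emph{Divergence theorem.} With the transversal field $\beta=e_d$, the outward unit normal $n$ of $T\cap\Omega$ along $T\cap\Gamma$ is $(-\nabla\phi,1)/\sqrt{1+|\nabla\phi|^2}$, so $\beta\cdot n=(1+|\nabla\phi|^2)^{-1/2}\ge (1+L^2)^{-1/2}=:c_0>0$ uniformly in $T$ and in how $\Gamma$ cuts $T$. First taking $v\in C^\infty(\overline T)$ (and passing to general $v\in H^1(T)$ at the end by density, both sides being $H^1(T)$-continuous via the trace theorem), Gauss--Green applied to $v^2\beta$ on the Lipschitz set $T\cap\Omega$ gives
\[
\int_{T\cap\Gamma} v^2(\beta\cdot n)\,ds = \int_{T\cap\Omega} 2v\,\partial_d v\,dx - \int_{\partial T\cap\overline{T\cap\Omega}} v^2(\beta\cdot n_T)\,ds,
\]
where $n_T$ is the outward normal of the simplex. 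Using $\beta\cdot n\ge c_0$ on the left, $|\beta\cdot n_T|\le1$ and $\partial T\cap\overline{T\cap\Omega}\subset\partial T$ on the right, and Cauchy--Schwarz on the volume term,
\[
c_0\,\|v\|_{L^2(T\cap\Gamma)}^2 \le 2\|v\|_{L^2(T)}\|\nabla v\|_{L^2(T)} + \|v\|_{L^2(\partial T)}^2 .
\]

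\emph{Conclusion.} I would then bound the face term by the scaled trace inequality \eqref{oldtrace}, $\|v\|_{L^2(\partial T)}^2\le C(h_T^{-1}\|v\|_{L^2(T)}^2+h_T\|\nabla v\|_{L^2(T)}^2)$, and the volume term by Young's inequality with weight $h_T$, $2\|v\|_{L^2(T)}\|\nabla v\|_{L^2(T)}\le h_T^{-1}\|v\|_{L^2(T)}^2+h_T\|\nabla v\|_{L^2(T)}^2$. Collecting terms and taking square roots yields \eqref{trace} with $C$ depending only on $c_0$ (hence on $L$) and on the shape-regularity constant $\kappa$ in \eqref{shaperegularity}. \textbf{The main obstacle is the localization step}: one must guarantee that for $h<h_0$ every cut simplex lies in a single chart, so that $T\cap\Gamma$ is a single Lipschitz graph with a Lipschitz constant uniform in $T$ and in the intersection pattern. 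This uniformity is precisely what makes $c_0$, and therefore $C$, independent of $T$ and of how $\Gamma$ meets $T$; everything downstream is a fixed divergence-theorem estimate.
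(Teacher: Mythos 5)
Your route is genuinely different from the paper's. The paper never touches the set $T\cap\Omega$ at all: it builds a scaled Stein extension $R_T:H^1(T)\to H^1(\mathbb{R}^d)$ with $\|R_Tv\|_{L^2}+h_T\|\nabla R_Tv\|_{L^2}\le C(\|v\|_{L^2(T)}+h_T\|\nabla v\|_{L^2(T)})$ (Lemma~\ref{extensionT}, proved by mapping to a reference simplex) and then applies the global multiplicative trace inequality $\|w\|^2_{L^2(\Gamma)}\le C\|w\|_{H^1(\Omega)}\|w\|_{L^2(\Omega)}$ of Proposition~\ref{traceprop} to $w=R_Tv$, followed by the arithmetic--geometric mean inequality with weight $h_T$. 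All dependence on how $\Gamma$ cuts $T$ is thereby absorbed into a single fixed trace constant for the Lipschitz domain $\Omega$, with no local geometric analysis whatsoever. You instead localize $\Gamma$ to a uniform Lipschitz graph via a finite atlas and a Lebesgue-number argument, and run the classical transversal-field/divergence-theorem proof, with the face term handled by \eqref{oldtrace}. Your localization, the uniform lower bound $\beta\cdot n\ge(1+L^2)^{-1/2}$, and the final Young-inequality bookkeeping are all sound, and your argument has the virtue of exhibiting how the constant depends on the Lipschitz character of $\Gamma$; the paper's argument is shorter and more robust precisely because it avoids integrating over $T\cap\Omega$.

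There is, however, one genuine soft spot in your write-up, and it is exactly the point the paper's extension argument is designed to sidestep: you invoke Gauss--Green ``on the Lipschitz set $T\cap\Omega$'', but $T\cap\Omega$ need \emph{not} be Lipschitz --- $\Gamma$ may meet $\partial T$ tangentially or slice off cusps and slivers, and the lemma explicitly claims uniformity over arbitrary intersection patterns; it is to legitimize steps of this kind that the references cited just before Lemma~\ref{lemmatrace} impose the ``stronger assumptions'' on how $\Gamma$ intersects the mesh. The step is repairable rather than fatal: $T\cap\Omega$ is a set of finite perimeter (the intersection of two such sets), the De Giorgi--Federer Gauss--Green theorem applies to the Lipschitz field $v^2\beta$ over the reduced boundary, and since $T$ is open and $\Gamma$ is locally a graph inside the chart, $\mathcal{H}^{d-1}$-a.e.\ point of $\Gamma\cap T$ belongs to the reduced boundary with the graph normal, while the remaining portion of the reduced boundary lies in $\partial T$ where $|\beta\cdot n|\le 1$; your inequality then survives verbatim. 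You should say this (or give an equivalent approximation argument) instead of asserting Lipschitzness. A second, minor point: the density step is circular as phrased, since the $H^1(T)$-continuity of $v\mapsto\|v\|_{L^2(T\cap\Gamma)}$ is precisely the inequality being proved; it is fixed in the standard way by applying your smooth-function estimate to differences $v_n-v_m$, so that the traces form a Cauchy sequence in $L^2(T\cap\Gamma)$, and identifying the limit with the trace of $v$ on the Lipschitz graph.
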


One can show the following stability result.
\begin{lemma}\label{l_coerc}
For $\eta$ sufficiently large and $h\le h_0$ for sufficiently small $h_0$, there exists a mesh-independent constant $c_0>0$ such that
\begin{equation}\label{stabilityu}
c_0 \|v_h\|_{V_h}^2 \le  a_h(v_h, v_h)\qquad\forall~v_h\in V_h.
\end{equation}
\end{lemma}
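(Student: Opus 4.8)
The plan is to run the classical Nitsche coercivity argument, where the only genuinely delicate ingredient is a ghost-penalty estimate that transfers norms from full cut simplices to the physical seminorm $|\cdot|_{H^1(\Omega)}$. Expanding the quadratic form and using $(\nabla v_h,\nabla v_h)=|v_h|_{H^1(\Omega)}^2$, one has
\begin{equation*}
a_h(v_h,v_h)=|v_h|_{H^1(\Omega)}^2+s_h(v_h,v_h)+\jj(v_h,v_h)+\eta\,j_h(v_h,v_h),
\end{equation*}
where $s_h(v_h,v_h)=-2\int_\Gamma (n\cdot\nabla v_h)\cdot v_h\,ds$ is the only term lacking a definite sign. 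All of the work consists in showing that $|s_h(v_h,v_h)|$ can be absorbed into the three remaining nonnegative terms once $\eta$ is large enough.

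First I would bound $s_h$ by Cauchy--Schwarz on each piece $T_\Gamma$ followed by Young's inequality with a free parameter $\varepsilon>0$, splitting the factors as $h_T^{1/2}\|n\cdot\nabla v_h\|_{L^2(T_\Gamma)}$ and $h_T^{-1/2}\|v_h\|_{L^2(T_\Gamma)}$, which gives
\begin{equation*}
|s_h(v_h,v_h)|\le \varepsilon\sum_{T\in\mathcal{T}_h^\Gamma}h_T\|n\cdot\nabla v_h\|_{L^2(T_\Gamma)}^2+\frac{1}{\varepsilon}\,j_h(v_h,v_h).
\end{equation*}
The second term is controlled directly by the $\eta\,j_h(v_h,v_h)$ contribution whenever $\eta>1/\varepsilon$. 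For the first term I would apply the boundary trace inequality of Lemma~\ref{lemmatrace} to each component of the polynomial $\nabla v_h$ and then the standard inverse inequality on the \emph{full} simplex $T$ (legitimate since $v_h|_T$ is a polynomial of degree $\le s$), yielding $h_T\|n\cdot\nabla v_h\|_{L^2(T_\Gamma)}^2\le C\|\nabla v_h\|_{L^2(T)}^2$ with $C$ independent of how $\Gamma$ cuts $T$; summing produces $\varepsilon C\sum_{T\in\mathcal{T}_h^\Gamma}\|\nabla v_h\|_{L^2(T)}^2$.

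The hard part is that these $\|\nabla v_h\|_{L^2(T)}^2$ are taken over the whole simplex $T$, while coercivity must be measured by $|v_h|_{H^1(\Omega)}^2$, which sees only the possibly tiny cut piece $T\cap\Omega$; this is exactly the small-cut difficulty that $\jj$ is meant to defeat. I would therefore establish the ghost-penalty bound
\begin{equation*}
\sum_{T\in\mathcal{T}_h^\Gamma}\|\nabla v_h\|_{L^2(T)}^2\le C\big(|v_h|_{H^1(\Omega)}^2+\jj(v_h,v_h)\big).
\end{equation*}
The mechanism is Burman's single-face estimate: for a face $F$ shared by simplices $T,T'$ and a polynomial $v_h$ of degree $\le s$, $\|\nabla v_h\|_{L^2(T)}^2\le C\|\nabla v_h\|_{L^2(T')}^2+C\sum_{\ell=1}^{s}h_F^{2\ell-1}\int_F[\partial_n^\ell v_h]^2\,ds$, whose weights are precisely those defining $\jj$ (the $\ell=0$ jump vanishes by continuity of $V_h$). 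For each cut simplex $T$ one follows the associated path $T=K_1,K_2,\dots,K_M=K_T$ into an interior simplex $K_T\in\mathcal{T}_h^i$, whose existence, uniformly bounded length $M$, and traversal across faces of $\mathcal{F}_h^\Gamma$ are guaranteed by Assumption~\ref{assumption1} and the construction of $K_T$. Iterating the single-face estimate along this chain, comparing mesh sizes via $\tfrac1c h_T\le h_{K_T}\le c\,h_T$ and shape regularity \eqref{shaperegularity}, bounds $\|\nabla v_h\|_{L^2(T)}^2$ by $\|\nabla v_h\|_{L^2(K_T)}^2\le|v_h|_{H^1(K_T)}^2$ (since $K_T\subset\Omega$) plus ghost-penalty contributions over the crossed faces. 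Summing over $T\in\mathcal{T}_h^\Gamma$, and using that the chains have finite overlap (again from shape regularity and Assumptions~\ref{assumption1}--\ref{assumption1b}), yields the displayed estimate.

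Finally I would assemble the pieces. Choosing $\varepsilon$ small enough that $\varepsilon C\le\tfrac12$ lets the first term consume at most half of $|v_h|_{H^1(\Omega)}^2+\jj(v_h,v_h)$, and then taking $\eta\ge 1/\varepsilon+\tfrac12$ keeps a positive share of $j_h(v_h,v_h)$. This gives
\begin{equation*}
a_h(v_h,v_h)\ge\tfrac12\big(|v_h|_{H^1(\Omega)}^2+\jj(v_h,v_h)+j_h(v_h,v_h)\big)=\tfrac12\|v_h\|_{V_h}^2,
\end{equation*}
which is \eqref{stabilityu} with $c_0=\tfrac12$. The restriction $h\le h_0$ enters only through Lemma~\ref{lemmatrace} and through the validity of Assumptions~\ref{assumption1}--\ref{assumption1b}.
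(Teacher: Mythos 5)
Your proposal is correct and follows essentially the same route as the paper's proof: Cauchy--Schwarz and Young's inequality to split $s_h$, the cut-cell trace inequality of Lemma~\ref{lemmatrace} plus an inverse estimate to reach $h_T\|n\cdot\nabla v_h\|_{L^2(T_\Gamma)}^2\le C\|\nabla v_h\|_{L^2(T)}^2$, and then the ghost-penalty chaining along the paths $T=K_1,\dots,K_M=K_T$ guaranteed by Assumption~\ref{assumption1} to absorb the full-simplex gradients into $|v_h|_{H^1(\Omega)}^2+\jj(v_h,v_h)$ (the paper's estimates \eqref{aux6}--\eqref{aux7}), before taking $\eta$ large. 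The only cosmetic difference is that you cite the single-face gradient estimate directly, whereas the paper derives it from the $L^2$-version \eqref{aux6a} via mean subtraction, Poincar\'e, and an inverse inequality.
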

\begin{proof}
To show \eqref{stabilityu} we need the following estimate, see Lemma 5.1 in \cite{massing2014stabilized}:
For any $T_1,T_2$ from $\mathcal{T}_h^{\Gamma}$ sharing a face $F=\overline{T_1}\cap\overline{T_2}$ it holds
\begin{equation}\label{aux6a}
\|q\|^2_{L^2(T_1)}\le C\left(\|q\|^2_{L^2(T_2)}+ \sum_{\ell = 0}^m h_F^{1+2 \ell} \int_{F}  \left[\partial_{n}^\ell q\right ]^2ds\right),\quad\forall~q\in P_m(T_1)\times P_m(T_2),
\end{equation}
with a constant $C$ depending only on the shape regularity of $\mathcal{T}_h$ and polynomial degree $m$.
Thanks to FE inverse inequality, \eqref{aux6a} and Poincare inequality, we have  for any $T_1,T_2$ from $\widetilde{\mathcal{T}}_h^{\Gamma}$ sharing a face $F=\overline{T_1}\cap\overline{T_2}$ the following estimate
\begin{equation}\label{aux6}
\begin{split}
\|\nabla v_h\|^2_{L^2(T_1)}&\le Ch_{F}^{-2}\|v_h-\alpha \|^2_{L^2(T_1)} \le
 C\left(h_{T_2}^{-2}\|v_h-\alpha \|^2_{L^2(T_2)}+ \sum_{\ell = 1}^s h_F^{-1+2 \ell} \int_{F}  \left[\partial_{n}^\ell v_h\right ]^2\right)\\& \le
 C\left(\|\nabla v_h\|^2_{L^2(T_2)}+ \sum_{\ell = 1}^s h_F^{-1+2 \ell} \int_{F}  \left[\partial_{n}^\ell v_h\right ]^2\right),\qquad\forall~v_h\in V_h,
 \end{split}
\end{equation}
where we take $\alpha =|T_2|^{-1}\int_{T_2}v_h\,ds$.  This inequality is also found in Proposition 5.1  in \cite{massing2014stabilized} in the case $V_h=[W_h^1]^{\revjj{d}}$.
%\textbf{\small Maxim says: Actually, \eqref{aux6} constitutes Proposition 5.1. in \cite{massing2014stabilized}, but I am not convinced with the proof given there - please check.}
Thanks to assumption~\ref{assumption1} the estimate
\eqref{aux6} implies
\begin{equation}\label{aux7}
\|\nabla v_h\|^2_{L^2(\Omega_h^e)}\le C(\|\nabla v_h\|^2_{L^2(\Omega)}+ \jj(v_h,v_h)\,).
\end{equation}

%One makes use of the trace inequality:
%\begin{equation}\label{trace}
%\|v\|_{L^2(T\cap\Gamma)}\le C(h_T^{-\frac12}\|v\|_{L^2(T)}+h_T^{\frac12}\|\nabla v\|_{L^2(T)})\quad \forall~T\in \mathcal{T}_h^{\Gamma},~~\forall~v\in H^1(T).
%\end{equation}
%with a constant $C$ independent of $v$, $T$ and how $\Gamma$ intersects $T$.

%Under assumption ~\ref{assumption1a}, the result in \eqref{trace} is proved in  \cite{chernyshenko2013non}.
%Without any special notice we shall also use the particular case of \eqref{trace}, where the norm on the left-hand side
%is replaced by $\|v\|_{L^2(\partial T)}$.
Further, one uses the Cauchy-Schwarz inequality, trace inequality \eqref{trace} and the FE inverse inequality to estimate
\[
\begin{split}
|s_h(v_h,v_h)|&=\left|\int_{\Gamma} (n\cdot\nabla v_h) \cdot v_h ds\right| \le \sum_{T\in\mathcal{T}_h^{\Gamma}}
\|\nabla v_h\|_{L^2(T_\Gamma)} \|v_h\|_{L^2(T_\Gamma)}\\ &\le \frac1{2\eta}\sum_{T\in\mathcal{T}_h^{\Gamma}}
h_T \|\nabla v_h\|_{L^2(T_\Gamma)}^2+\frac\eta2 j_h(v_h,v_h)\\
&\le \frac{C}{2\eta}\sum_{T\in\mathcal{T}_h^{\Gamma}}
(\|\nabla v_h\|_{L^2(T)}^2+h_T^2\|\nabla^2v_h\|_{L^2(T)}^2) +\frac\eta2 j_h(v_h,v_h)
\\
&\le \frac{C}{2\eta}\|\nabla v_h\|_{L^2(\Omega_h^e)}^2 +\frac\eta2 j_h(v_h,v_h).
\end{split}
\]
Combining this with \eqref{aux7} and choosing $\eta$ sufficiently large, but independent of $h$, proves the lemma.
\end{proof}

We need to define the scaled semi-norms for the pressure:
\begin{align*}
|p|_{H_{h,i}^1}^2&=\sum_{T \in \mathcal{T}_h^i} h_T^2 \|\nabla p\|_{L^2(T)}^2+ \sum_{F \in  \mathcal{F}_h^{i}} h_F \|[p]\|_{L^2(F)}^2,\\
|p|_{H_{h,e}^1}^2&=\sum_{T \in \mathcal{T}_h^e} h_T^2 \|\nabla p\|_{L^2(T)}^2+ \sum_{F \in  \mathcal{F}_h^{e}} h_F \|[p]\|_{L^2(F)}^2.
\end{align*}

\begin{assumption}\label{assumption2}
Assume that there exists a constant $\beta>0$ independent of $h$ and only depending on polynomial degree of finite element spaces and the shape regularity of $\mathcal{T}_h$ such that
\begin{equation}\label{assumption2inq}
\beta |q|_{H_{h,i}^1} \le \sup_{v \in V_h^i} \frac{\revm{\int_{\Omega_h^i} q\dive v \,dx}}{\|v\|_{H^1(\Omega_h^i)}} \quad \forall q \in Q_h,
\end{equation}
where $V_h^i=V_h \cap \left[H_0^1(\Omega_h^i)\right]^d$.
\end{assumption}

We  also need the following extension result.  A proof of this result is given in the appendix.
\begin{lemma}\label{lemmaextension}
For every $q \in Q_h$ there exists a $E_h q \in Q_h^{\rm disc}$ such
\begin{equation*}
E_h q=q \text{ on } \Omega_h^i
\end{equation*}
and
\begin{equation}\label{extension1}
|E_h q|_{H_{h,e}^1} \le C |q|_{H_{h,i}^1}.
\end{equation}
\end{lemma}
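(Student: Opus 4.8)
The plan is to build $E_h q$ by copying interior polynomials across the cut region. For $T\in\mathcal{T}_h^i$ set $E_h q|_T=q|_T$, and for $T\in\mathcal{T}_h^\Gamma$ let $E_h q|_T$ be the restriction to $T$ of the polynomial $q|_{K_T}$, viewed as an element of $P^{k_p}(\mathbb{R}^d)$, where $K_T\in W(T)\subset\mathcal{T}_h^i$ is the associated interior simplex. By construction $E_h q$ is piecewise polynomial of degree $k_p$ on $\mathcal{T}_h^e$, hence $E_h q\in Q_h^{\rm disc}$, and $E_h q=q$ on $\Omega_h^i$. It remains to establish \eqref{extension1}, which I split into the volume and face parts of the seminorm.

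\textbf{Volume part.} I would first note that by Assumption~\ref{assumption1} the simplex $K_T$ lies within a bounded number of element layers of $T$ and satisfies $c^{-1}h_T\le h_{K_T}\le c\,h_T$, so that, after scaling to unit diameter, the pair $(T,K_T)$ realizes one of finitely many admissible configurations determined by $\kappa$. A compactness / equivalence-of-norms argument on the finite-dimensional space $P^{k_p}$ then yields $\|\nabla E_h q\|_{L^2(T)}\le C\|\nabla q\|_{L^2(K_T)}$ with $C=C(\kappa,k_p)$; the relevant kernel containment is immediate, since a polynomial with $\nabla P\equiv 0$ on $K_T$ is constant and hence has $\nabla P\equiv 0$ on $T$. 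Multiplying by $h_T^2\sim h_{K_T}^2$ and summing, while using that each $K\in\mathcal{T}_h^i$ serves as $K_T$ for only a shape-regularity-bounded number of $T$, controls $\sum_{T\in\mathcal{T}_h^e}h_T^2\|\nabla E_h q\|^2_{L^2(T)}$ by $C|q|^2_{H_{h,i}^1}$.

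\textbf{Face part.} For a face $F=\partial T_1\cap\partial T_2\in\mathcal{F}_h^e$ the jump of $E_h q$ equals $q|_{K_{T_1}}-q|_{K_{T_2}}$ restricted to $F$ (recall $K_T=T$ for interior $T$); for interior--interior faces this reduces to $[q]$ on $F$ itself, feeding directly into $|q|^2_{H_{h,i}^1}$. In the remaining case I would invoke Assumption~\ref{assumption1b}: $K_{T_1}$ and $K_{T_2}$ are joined by a chain $K_{T_1}=R_0,R_1,\dots,R_n=K_{T_2}$ of interior simplices sharing faces $G_j\in\mathcal{F}_h^i$, with $n$ bounded independently of $h$. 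The estimate I need is
\begin{equation*}
h_F\,\|q|_{K_{T_1}}-q|_{K_{T_2}}\|^2_{L^2(F)}\le C\Big(\sum_{j} h_{R_j}^2\|\nabla q\|^2_{L^2(R_j)}+\sum_{j} h_{G_j}\|[q]\|^2_{L^2(G_j)}\Big),
\end{equation*}
which I would again prove by rescaling the chain-plus-$F$ patch to unit size and applying equivalence of norms on piecewise polynomials: the right-hand side is a seminorm whose kernel consists of functions that are globally constant on the patch, and any such function has $q|_{K_{T_1}}=q|_{K_{T_2}}$, so the left-hand side vanishes on that kernel. Bounded chain length, shape regularity, and $h$ small (so local mesh sizes stay comparable along the chain) keep the set of configurations precompact, yielding a uniform $C$.

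\textbf{Main obstacle.} The crux is precisely this last estimate: unlike the volume bound, one cannot control $\|q|_{K_{T_1}}-q|_{K_{T_2}}\|_{L^2(F)}$ by the interface jumps $\|[q]\|_{L^2(G_j)}$ alone, since a polynomial may be small on one hyperplane and large on another; both the element-gradient contributions and the face jumps must be combined along the connecting path, which is exactly what Assumption~\ref{assumption1b} together with the compactness argument provides. Finally, summing over all $F\in\mathcal{F}_h^e$ and using that each interior element and interior face appears in only boundedly many chains (shape regularity plus the bounded layer count and bounded $n$) assembles the local bounds into $|E_h q|_{H_{h,e}^1}\le C|q|_{H_{h,i}^1}$.
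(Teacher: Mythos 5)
Your proposal is correct and takes essentially the same route as the paper: the identical extension $E_h q|_T = q_{K_T}^{\mathrm{ext}}|_T$, the same volume bound via Assumption~\ref{assumption1} and scaling, and the same use of the Assumption~\ref{assumption1b} chain to control cut-face jumps by interior jumps plus element gradients, including the key kernel observation that both contributions are needed because the patch seminorm vanishes exactly on global constants. The only difference is presentational: where you prove the chain estimate in one shot by equivalence of norms plus precompactness of rescaled patch configurations, the paper derives the same inequality constructively, telescoping $q_{K_1}^{\mathrm{ext}}-q_{K_M}^{\mathrm{ext}}$ along the chain with an inverse estimate on $F$ and per-element finite-dimensional norm equivalences, which makes the uniformity of $C$ (dependence only on $\kappa$, $k_p$, and the bounded chain length $M$) immediate without any compactness argument.
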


Using the degrees of freedom of piecewise linear functions one can show the following  result.%\footnote{Referee asks us for a proof of this. Indeed, in \cite{Reusken2016} the results is somehow "hidden" and uses further assumptions on $\Gamma$.}
 %found in \cite{Reusken2016}.%\footnote{\bf I checked the journal version of \cite{Reusken2016}, and it looks to me that
%this and other results in \cite{Reusken2016} we are citing are proved in 3D.}
 %PLEASE CHECK: Note that that the result in \cite{Reusken2016} is given for dimension $d=2$ but the result for $d=3$ follows a similar argument.
\begin{lemma}
For every $v_h \in W_h^1$ there exists a unique decomposition
\begin{equation}\label{decomp}
v_h=\pi_1 v_h+ \pi_2 v_h,
\end{equation}
where $\pi_i v_h \in W_h^1$ for $i=1,2$,  $\pi_2 v_h$ is supported in $\widetilde{\Omega}_h^{\Gamma}$ and such that
\begin{equation} \label{decomposition2}
\pi_2 v_h=v_h  \quad \text{ on } \Omega_h^{\Gamma}
\end{equation}
and
\begin{equation}\label{decomposition1}
\sum_{T \in \widetilde{\mathcal{T}}_h^\Gamma} \frac{1}{h_T^2} \| \pi_2 v_h\|_{L^2(T)}^2  \le C  \sum_{T \in \mathcal{T}_h^\Gamma} \frac{1}{h_T^2} \|v_h\|_{L^2(T)}^2.
\end{equation}
The constant $C$ is independent of $v_h$ and $h$ and only depends on the shape regularity of the mesh.  In particular, note that this implies  $\pi_1v_h \in V_h^i$.
\end{lemma}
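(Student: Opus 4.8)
The plan is to build the decomposition directly from the vertex (nodal) degrees of freedom of $W_h^1$. Let $\mathcal{N}_\Gamma$ denote the set of all vertices of $\mathcal{T}_h^e$ that belong to at least one simplex of $\mathcal{T}_h^\Gamma$. I would let $\pi_2 v_h\in W_h^1$ be the continuous piecewise linear function whose nodal value equals $v_h$ at each vertex of $\mathcal{N}_\Gamma$ and equals $0$ at every other vertex of $\mathcal{T}_h^e$, and set $\pi_1 v_h:=v_h-\pi_2 v_h$. Since an element of $W_h^1$ is determined by its vertex values, both pieces are unambiguously defined; uniqueness then follows once one checks that the two imposed conditions (support of $\pi_2 v_h$ in $\widetilde{\Omega}_h^\Gamma$, and the matching $\pi_2 v_h=v_h$ on $\Omega_h^\Gamma$) force precisely these nodal values. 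Concretely, the support requirement forces the nodal value to vanish at every vertex that lies in some simplex outside $\widetilde{\mathcal{T}}_h^\Gamma$, while \eqref{decomposition2} fixes the value to be $v_h$ at the vertices of $\mathcal{N}_\Gamma$; under the resolution Assumption~\ref{assumption1} the remaining "free" vertices of $\widetilde{\Omega}_h^\Gamma$ coincide with $\mathcal{N}_\Gamma$, leaving no undetermined degrees of freedom.

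Next I would verify the structural properties. If $T\in\mathcal{T}_h^e$ is a simplex on which $\pi_2 v_h$ is not identically zero, then $T$ carries a vertex of $\mathcal{N}_\Gamma$, hence shares a vertex with some cut simplex, so $\overline{T}\cap\overline{\Omega_h^\Gamma}\neq\emptyset$; since $T\in\mathcal{T}_h^\Gamma$ or $T\subset\Omega$, this places $T\in\widetilde{\mathcal{T}}_h^\Gamma$, giving the required support of $\pi_2 v_h$ in $\widetilde{\Omega}_h^\Gamma$. Every $T\in\mathcal{T}_h^\Gamma$ has all of its vertices in $\mathcal{N}_\Gamma$, so $\pi_2 v_h$ and $v_h$ share all nodal values on $T$ and therefore coincide there, which is \eqref{decomposition2}. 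For the concluding assertion $\pi_1 v_h\in V_h^i$, observe that $\pi_1 v_h=v_h-\pi_2 v_h$ vanishes at every vertex of $\mathcal{N}_\Gamma$; since each vertex of $\partial\Omega_h^i$ is shared by an interior simplex and a cut simplex, and hence lies in $\mathcal{N}_\Gamma$, the piecewise linear $\pi_1 v_h$ vanishes on $\partial\Omega_h^i$, so its restriction to $\Omega_h^i$ lies in $H_0^1(\Omega_h^i)$ and $\pi_1 v_h\in V_h^i$.

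The quantitative heart, and the step I expect to be the main obstacle, is \eqref{decomposition1}, where a sum over the larger family $\widetilde{\mathcal{T}}_h^\Gamma$ must be absorbed into a sum over the smaller family $\mathcal{T}_h^\Gamma$. My plan is to pass to nodal values through the local norm equivalence $\|w\|_{L^2(T)}^2\simeq h_T^d\sum_{x\in T}|w(x)|^2$, valid for $w$ linear on a shape-regular simplex with constants depending only on $\kappa$. Applying this on each side and regrouping contributions by vertex reduces \eqref{decomposition1} to the comparison $\sum_{x\in\mathcal{N}_\Gamma}\big(\sum_{T\ni x,\,T\in\widetilde{\mathcal{T}}_h^\Gamma}h_T^{d-2}\big)\,|v_h(x)|^2\lesssim\sum_{x}\big(\sum_{T\ni x,\,T\in\mathcal{T}_h^\Gamma}h_T^{d-2}\big)\,|v_h(x)|^2$, where on the left only vertices of $\mathcal{N}_\Gamma$ contribute because $\pi_2 v_h$ is nonzero only there. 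Shape regularity bounds the number of simplices meeting any vertex and makes all $h_T$ in a vertex patch comparable, so the inner sum on the left is $\simeq h_x^{d-2}$; the same bound gives the inner sum on the right $\gtrsim h_x^{d-2}$ for each $x\in\mathcal{N}_\Gamma$, since by definition such $x$ lies in at least one simplex of $\mathcal{T}_h^\Gamma$. Combining the two one-sided bounds yields \eqref{decomposition1}. The delicate point is exactly this coverage: it is essential that every nonzero nodal value of $\pi_2 v_h$ be anchored at a vertex already carried by the right-hand family $\mathcal{T}_h^\Gamma$, which is what permits the wider left-hand summation to be controlled.
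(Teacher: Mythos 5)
Your proposal is correct and follows essentially the same route as the paper: you define $\pi_2 v_h$ nodally as $v_h$ at the vertices of cut simplices (your $\mathcal{N}_\Gamma$ is the paper's $V(\mathcal{T}_h^\Gamma)$) and zero at all other vertices, and you prove \eqref{decomposition1} through the local equivalence of the $L^2$ norm with scaled nodal sums together with a regrouping over vertex patches, which is exactly the paper's estimate via $\widetilde\omega(\widetilde T)=\omega(\widetilde T)\cap\mathcal{T}_h^\Gamma$ written vertex-wise rather than element-wise. Your additional remarks on uniqueness and on $\pi_1 v_h\in V_h^i$ merely spell out steps the paper dismisses as clear, so the two arguments coincide in substance.
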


\begin{proof}
For a set of tetrahedra $\tau$, $V(\tau)$
denotes the set of all vertices of tetrahedra from $\tau$.
For $v_h \in W_h^1$, one defines $\pi_2 v_h(x)=v_h(x)$ for all $x\in V(\mathcal{T}_h^\Gamma)$ and  $\pi_2 v_h(x)=0$ for all
 $x\in V(\T_h\setminus\mathcal{T}_h^\Gamma)$. It is clear that $\pi_2 v_h=v_h$ on $\Omega_h^{\Gamma}$ and $\pi_1v_h \in V_h^i$. For any $ T\in\widetilde{\mathcal{T}}_h^\Gamma$ let $\widetilde\omega(T)= \omega(T)\cap\mathcal{T}_h^\Gamma$.
Thanks to the shape regularity assumption, we have for any $\widetilde T \in \widetilde{\mathcal{T}}_h^\Gamma$:
\[
\begin{split}
  h_{\widetilde T}^{-2} \| \pi_2 v_h\|_{L^2(\widetilde T)}^2&\le C\, h_{\widetilde T}^{-2}|\widetilde T| \sum_{x\in V(\widetilde T)}|\pi_2 v_h(x)|^2 =
  h_{\widetilde T}^{-2}|\widetilde T| \sum_{x\in V(\widetilde T) \cap V(\mathcal{T}_h^\Gamma)}|\pi_2 v_h(x)|^2\\ &= h_{\widetilde T}^{-2}|\widetilde T| \sum_{x\in V(\widetilde T) \cap V(\mathcal{T}_h^\Gamma)}|v_h(x)|^2
  \le C\, \sum_{T\in\widetilde\omega(\widetilde{T})}h_T^{-2}|T|\sum_{x\in V(T)}|v_h(x)|^2\\ &\le C\, \sum_{T\in\widetilde\omega(\widetilde{T})}h_T^{-2}\|  v_h\|_{L^2(T)}^2.
\end{split}
  \]
Summing over all $ \widetilde{T} \in\widetilde{\mathcal{T}}_h^\Gamma$ and using shape regularity again  we prove the result in
\eqref{decomposition1}.
\end{proof}

The following theorem shows the LBB stability result for the internal domain $\Omega_h^i$ and so proves the key uniform
bound \eqref{LBBi}. Note again that $\Omega_h^i$ is not an $O(h^2)$ approximation of a smooth domain and there is no uniform in $h$ result concerning decomposition of $\Omega_h^i$ into a union of a finite number of star-shaped domains. The latter is a standard assumption for proving the differential counterpart of this finite element condition, see, e.g., \cite{galdi2011introduction}.
This result is  crucial for the stability and convergence analysis of the unfitted FE method \eqref{FEM}. For the lowest order Taylor-Hood element, the proof of the following result is found in \cite{Reusken2016}.
We follow a similar argument, but extend the result so it can be applied to higher order elements in two and three dimensions.

\begin{theorem}\label{infsup}
Suppose Assumptions \ref{assumption1}-- \ref{assumption2} hold.  Then, there exists a constant $\theta>0$  and a constant $h_0$ such that   for all $q \in Q_h$ with $\int_{\Omega_h^i} q dx=0$  we have the following result for $h \le h_0$
\begin{equation}\label{LBB}
\theta \|q\|_{L^2(\Omega_h^i)} \le  \sup_{v \in V_h^i} \frac{\revm{\int_{\Omega_h^i} q\dive v \,dx}}{\|v\|_{H^1(\Omega_h^i)}}.
\end{equation}
The constant $\theta>0$ is independent of  $q$ and $h$.
\end{theorem}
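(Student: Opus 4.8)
The plan is to adapt the Verf\"urth argument so that the underlying continuous inf-sup (Ne\v{c}as) inequality is invoked on the \emph{fixed} Lipschitz domain $\Omega$ rather than on the $h$-dependent domain $\Omega_h^i$; this is what keeps the resulting constant independent of $h$. Writing $S(q):=\sup_{v\in V_h^i}\int_{\Omega_h^i}q\dive v\,dx/\|v\|_{H^1(\Omega_h^i)}$ for the quantity to be bounded from below, the argument rests on three ingredients: (i) the continuous inf-sup on $\Omega$, whose constant $C_\Omega$ is fixed because $\Omega$ is Lipschitz and does not change with $h$; (ii) a Fortin-type operator carrying $[H^1_0(\Omega)]^d$ into $V_h^i$, realized by a piecewise-linear interpolation followed by the cut-off $\pi_1$ from the decomposition \eqref{decomp}; and (iii) Assumption~\ref{assumption2}, used to absorb the discrete pressure seminorm $|q|_{H_{h,i}^1}$ that appears after integration by parts.

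Concretely, given $q\in Q_h$ with $\int_{\Omega_h^i}q\,dx=0$, I first set $\hat q:=E_h q$ (Lemma~\ref{lemmaextension}), so that $\hat q=q$ on $\Omega_h^i$ and $|\hat q|_{H_{h,e}^1}\le C|q|_{H_{h,i}^1}$, and I let $q_0:=\hat q-\bar q$ with $\bar q=|\Omega|^{-1}\int_\Omega\hat q\,dx$, which has zero mean on $\Omega$. Using the Bogovskii construction on the fixed domain $\Omega$ I obtain $w\in[H^1_0(\Omega)]^d$ with $\dive w=q_0$ in $\Omega$ and $\|w\|_{H^1(\Omega)}\le C_\Omega\|q_0\|_{L^2(\Omega)}$; since the extension together with an inverse estimate gives $\|q_0\|_{L^2(\Omega)}\le C\|q\|_{L^2(\Omega_h^i)}$, the $H^1$ norm of $w$ is controlled by $\|q\|_{L^2(\Omega_h^i)}$. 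Extending $w$ by zero outside $\Omega$ (legitimate since $w$ vanishes on $\Gamma=\partial\Omega$), I take its piecewise-linear Scott--Zhang interpolant $I_h w\in(W_h^1)^d\subset V_h$ and set $v_h:=\pi_1 I_h w\in V_h^i$ (componentwise), so that the decomposition lemma yields $\|v_h\|_{H^1(\Omega_h^i)}\le C\|w\|_{H^1(\Omega)}\le C\|q\|_{L^2(\Omega_h^i)}$. The basic identity is $\int_\Omega\hat q\,\dive w\,dx=\|q_0\|_{L^2(\Omega)}^2\ge\|q\|_{L^2(\Omega_h^i)}^2$, the last step using $\hat q=q$ and $\int_{\Omega_h^i}q\,dx=0$ on $\Omega_h^i$.

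It remains to compare $\int_{\Omega_h^i}q\,\dive v_h\,dx$ with this identity. Since $v_h$ is supported in $\overline{\Omega_h^i}$ and $\hat q=q$ there, $\int_{\Omega_h^i}q\,\dive v_h\,dx=\int_\Omega\hat q\,\dive v_h\,dx=\|q_0\|_{L^2(\Omega)}^2+\int_\Omega\hat q\,\dive(v_h-w)\,dx$. Because \emph{both} $v_h$ and $w$ lie in $[H^1_0(\Omega)]^d$, element-wise integration by parts of the last term produces no boundary contribution on $\partial\Omega$ nor on the cut pieces $T\cap\Gamma$, leaving only $-\sum_T\int_{T\cap\Omega}\nabla\hat q\cdot(v_h-w)$ and interior jump terms $\sum_F\int_{F\cap\Omega}[\hat q](v_h-w)\cdot n$, both bounded by $C|\hat q|_{H_{h,e}^1}$ times a weighted norm of $v_h-w$. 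Splitting $v_h-w=(I_h w-w)-\pi_2 I_h w$, the approximation part is handled by standard estimates, while for the cut-off part I use that $w$ vanishes on $\Gamma$ and $\Omega_h^\Gamma$ lies within $O(h)$ of $\Gamma$, so a Poincar\'e estimate gives $\|I_h w\|_{L^2(\Omega_h^\Gamma)}\le Ch\|\nabla w\|_{L^2}$; this compensates the $h_T^{-2}$ weight in \eqref{decomposition1} and yields $\big(\sum_{\widetilde{\mathcal T}_h^\Gamma}h_T^{-2}\|\pi_2 I_h w\|_{L^2(T)}^2\big)^{1/2}\le C\|w\|_{H^1(\Omega)}$. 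Altogether $|\int_\Omega\hat q\,\dive(v_h-w)\,dx|\le C|q|_{H_{h,i}^1}\,\|q\|_{L^2(\Omega_h^i)}$, and Assumption~\ref{assumption2} bounds $|q|_{H_{h,i}^1}\le\beta^{-1}S(q)$. Combining and using $\int_{\Omega_h^i}q\,\dive v_h\,dx\le S(q)\|v_h\|_{H^1(\Omega_h^i)}$ gives $\|q\|_{L^2(\Omega_h^i)}^2\le C'S(q)\|q\|_{L^2(\Omega_h^i)}$, and dividing by $\|q\|_{L^2(\Omega_h^i)}$ yields \eqref{LBB} with $\theta=1/C'$.

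The main obstacle is the boundary-layer (cut-region) analysis of the last paragraph: the $h$-independence of $\theta$ hinges on every contribution arising from the mismatch between $\Omega$ and $\Omega_h^i$ being absorbed either into $|q|_{H_{h,i}^1}$ (hence into $S(q)$ via Assumption~\ref{assumption2}) or into $\|q\|_{L^2(\Omega_h^i)}$. This is exactly where the extension Lemma~\ref{lemmaextension}, the cut-off from the decomposition lemma, the Poincar\'e estimate near $\Gamma$, and the geometric Assumptions~\ref{assumption1}--\ref{assumption1b} (which guarantee the bounded-length chains of cut elements needed to propagate local estimates from the interior to the cut region) all enter; it is also the reason the naive route through the Ne\v{c}as inequality on $\Omega_h^i$ fails.
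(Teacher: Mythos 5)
Your proposal is correct and follows essentially the same route as the paper's proof: extend $q$ by Lemma~\ref{lemmaextension}, solve the divergence equation with the Bogovskii operator on the \emph{fixed} Lipschitz domain $\Omega$, take the Scott--Zhang interpolant of the zero-extended right inverse, split it via the $\pi_1/\pi_2$ decomposition \eqref{decomp}, integrate by parts so that all mismatch terms are controlled by $|q|_{H_{h,i}^1}$ times $\|q\|_{L^2(\Omega_h^i)}$, and absorb $|q|_{H_{h,i}^1}$ into the supremum via Assumption~\ref{assumption2}. The only two deviations are local and both sound: you remove the mean-value constant by the orthogonality $\|q-\bar q\|_{L^2(\Omega_h^i)}^2=\|q\|_{L^2(\Omega_h^i)}^2+\bar q^2|\Omega_h^i|\ge\|q\|_{L^2(\Omega_h^i)}^2$, which is slightly cleaner than the paper's estimate $\|c_h\|_{L^2(\Omega)}\le Ch^{1/d}\|\psi\|_{L^2(\Omega)}$ requiring $h$ small, and you control the cut-off term $\pi_2 I_h w$ by a strip Poincar\'e inequality near $\Gamma$ where the paper uses the discrete Friedrichs inequality \eqref{inq3} (note that with the global factor $h$ your version matches the $h_T^{-2}$ weights in \eqref{decomposition1} only on quasi-uniform meshes, so for the locally refined meshes allowed in Remark~\ref{R_d1} you would want its local, $h_T$-weighted analogue, which holds by the same argument).
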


\begin{proof}
Let $\psi=E_h q$ given by Lemma~\ref{lemmaextension} and let   $c_h=\frac{1}{|\Omega|} \int_{\Omega} E_h q$. Using that $\Gamma$ is Lipschitz,
there exists a $v \in [H_0^1(\Omega)]^{\revjj{2}}$ with the following properties, cf. \revm{\cite{bogovskii1979solution,galdi2011introduction}}:
\begin{equation}\label{Bogov}
\dive v = \psi-c_h\quad \text{on}~~ \Omega
\end{equation}
and
\begin{equation}\label{inq1}
\|v\|_{H^1(\Omega)} \le C\, \|\psi-c_h\|_{L^2(\Omega)}.
\end{equation}

Extend $v$ by zero to all of $\Omega_h^e$. Let $v_h\revm{\in W^1_h}$ be the Scott-Zhang interpolant of $v$ \revm{and $v_h|_{\partial \Omega^e_h}=0$}. We will write $(v,w)_e=\int_{\Omega_h^e} v w dx$.
With the help of \eqref{Bogov} and the decomposition \eqref{decomp}, we obtain
\begin{equation}\label{aux*}
\|\psi-c_h\|_{L^2(\Omega)}^2=(\dive v, \psi)= (\dive \pi_1 v_h, \psi)_{e} +(\dive (v- v_h), \psi)_{e}+ (\dive \pi_2 v_h, \psi)_{e}.
\end{equation}

Integration by parts over each $T\in\T_h^e$ gives
\begin{equation*}
(\dive (v-v_h), \psi)_e=  -\sum_{T \in \mathcal{T}_h^e} \int_{T}  (v-v_h)  \cdot \nabla \psi dx- \sum_{F \in \mathcal{F}_h^e} \int_F [\psi] (v-v_h) \cdot n ds.
\end{equation*}
We proceed by applying the Cauchy-Schwarz inequality,  elementwise trace inequality, and the definition of the $H^1_{h,e}$ norm.  This gives the bound
\begin{equation*}
(\dive (v-v_h), \psi)_e \le C\left(\sum_{T \in \mathcal{T}_h^e} (\frac{1}{h_T^2} \|v-v_h\|_{L^2(T)}^2 +\|\nabla (v-v_h)\|_{L^2(T)}^2) \right)^{1/2} |\psi|_{H_{h,e}^1}.
\end{equation*}
Using the approximation properties of the Scott-Zhang interpolant, \eqref{inq1} and \eqref{extension1}, we have
\begin{equation}\label{auxA}
(\dive (v-v_h), \psi)_e  \le C \|\psi-c_h\|_{L^2(\Omega)} |\psi|_{H_{h,e}^1} \le C \|\psi-c_h\|_{L^2(\Omega)} |q|_{H_{h,i}^1}.
\end{equation}

In a similar fashion,  but now using inverse FE estimates instead of  approximation results, and recalling that $\operatorname{supp}(\pi_2 v_h)\subset\widetilde{\Omega}_h^\Gamma$, we show
\begin{equation}\label{auxB2}
(\dive \pi_2 v_h, \psi)_e \le C\left(\sum_{T \in \widetilde{\mathcal{T}}_h^\Gamma} \frac{1}{h_T^2} \| \pi_2 v_h\|_{L^2(T)}^2  \right)^{1/2} |\psi|_{H_{h,e}^1}
\le C\left(\sum_{T \in \mathcal{T}_h^\Gamma} \frac{1}{h_T^2} \|v_h\|_{L^2(T)}^2  \right)^{1/2} |\psi|_{H_{h,e}^1}.
\end{equation}
Note the following Friedrich's type FE inequality:
\[
h_T^{-2}\|v_h\|^2_{L^2(T)}+h_T^{-1}\|v_h\|^2_{L^2(\partial T)}\le C (\|\nabla v_h\|^2_{L^2(T)}+h_T^{-1}\|v_h\|^2_{L^2(F)})\quad\forall~T\in\T_h,~F~\text{is a face of}~T.
\]
We apply the above inequality elementwise and use $v_h=0$ on $\partial\Omega^e_h$ to show that
\begin{equation}\label{inq3}
\left(\sum_{T \in \mathcal{T}_h^\Gamma} \frac{1}{h_T^2} \| v_h\|_{L^2(T)}^2  \right)^{1/2} \le C \|\nabla v_h\|_{L^2(\Omega_h^\Gamma)} \le C \, \|\nabla v\|_{L^2(\Omega_h^e)}=C \, \|\nabla v\|_{L^2(\Omega)}.
\end{equation}
In the last inequality we used the stability of the Scott-Zhang interpolant.
Hence, using \eqref{inq1} we get from  \eqref{auxB2}--\eqref{inq3} the estimate
\begin{equation}\label{auxB}
(\dive \pi_2 v_h, \psi)_e \le  C \|\psi-c_h\|_{L^2(\Omega)} |q|_{H_{h,i}^1}.
\end{equation}
The last term on the right hand side of \eqref{aux*} we handle as follows:
\[
(\dive \pi_1 v_h, \psi)_{e}=(\dive \pi_1 v_h, \psi)_{L^2(\Omega_h^i)}\le  \|\pi_1 v_h\|_{H^1(\Omega_h^i)} \sup_{w \in V_h^i} \frac{(\dive w,  q)_{\revm{L^2(\Omega_h^i)}}}{\|w\|_{H^1(\Omega_h^i)}}.
\]

Now we bound $ \|\pi_1 v_h\|_{H^1(\Omega)} $,
\begin{equation*}
\|\pi_1 v_h\|_{H^1(\Omega)} \le ( \|\pi_2 v_h\|_{H^1(\widetilde{\Omega}_h^\Gamma)} + \| v_h\|_{H^1(\Omega)} ).
\end{equation*}
Using inverse estimates, \eqref{decomposition1} and \eqref{inq3} we get
\begin{equation*}
 \|\pi_2 v_h\|_{H^1(\widetilde{\Omega}_h^\Gamma)} \le C\|v\|_{H^1(\Omega_h^e)}=C\|v\|_{H^1(\Omega)}
\end{equation*}
Hence, the stability of the Scott-Zhang interpolant and \eqref{inq1} imply
\begin{equation}\label{auxC}
\|\pi_1 v_h\|_{H^1(\Omega)} \le C \, \|\psi-c_h\|_{L^2(\Omega)}.
\end{equation}

Therefore, we get from \eqref{auxA}, \eqref{auxB}, \eqref{auxC} and \eqref{aux*} the upper bound
\begin{equation*}
 \|\psi-c_h\|_{L^2(\Omega)}  \le C \left( \sup_{w \in V_h^i} \frac{(\dive w,  q)_{\revm{L^2(\Omega_h^i)}}}{\|w\|_{H^1(\Omega_h^i)}}+|q|_{H_{h,i}^1}\right).
\end{equation*}
Using assumption~\ref{assumption2} we get
\begin{equation}\label{auxD}
 \|\psi-c_h\|_{L^2(\Omega)}  \le C  \sup_{w \in V_h^i} \frac{(\dive w,  q)_{\revm{L^2(\Omega_h^i)}}}{\|w\|_{H^1(\Omega_h^i)}}.
\end{equation}
Finally,  note that
\begin{equation*}
\|c_h\|_{L^2(\Omega)} \le |\Omega|^{1/d} |c_h| = |\Omega|^{\revj{-1+1/d}} \left|\int_{\Omega} \psi dx\right|=|\Omega|^{\revj{-1+1/d}}   \left|\int_{\Omega \setminus \Omega_h^i} \psi dx\right|.
\end{equation*}
The last equality holds since $\psi=q$ in $\Omega_h^i$  and $\int_{\Omega_h^i}q\,dx=0$.
After applying Cauchy-Schwarz inequality and using that $|\Omega \setminus \Omega_h^i|^{\revj{1/d}} \le h^{\revj{1/d}}$ we have that
\begin{equation*}
\|c_h\|_{L^2(\Omega)} \le C h^{1/\revj{d}} \|\psi\|_{L^2(\Omega)}.
\end{equation*}

Hence, using the triangle inequality in \eqref{auxD} and assuming $h$ is sufficiently small we have
\begin{equation*}
 \|\psi\|_{L^2(\Omega)}  \le C  \sup_{w \in V_h^i} \frac{(\dive w,  q)_{\revm{L^2(\Omega_h^i)}}}{\|w\|_{H^1(\Omega_h^i)}}.
\end{equation*}
We note that the constant $C$ is independent of $h$ and $q$.
The result now follows after noting that $\|q\|_{L^2(\Omega_h^i)} \le \|\psi\|_{L^2(\Omega)}$ and letting  $\theta =\frac{1}{C}$.
\end{proof}
\medskip
\begin{corollary}\label{c_b_stab} If assumptions~\ref{assumption1}--\ref{assumption2} hold true, the following stability condition is satisfied
by the $b_h$ and $J_h$ forms of the finite element method \eqref{FEM},
\begin{equation}\label{b_stab}
c_b \|q\|_{L^2(\Omega)} \le  \sup_{v \in V_h} \frac{b_h(v, q)}{\|v\|_{V_h}}+J_h^{\frac12}(q,q)\quad\forall~q \in Q_h,~~s.t.~ q|_{\Omega_h^i}\in L^2_0(\Omega_h^i).
\end{equation}
The constant $c_b>0$ is independent of  $q$ and $h$.
\end{corollary}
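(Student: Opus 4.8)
The plan is to bound $\|q\|_{L^2(\Omega)}^2=\|q\|_{L^2(\Omega_h^i)}^2+\|q\|_{L^2(\Omega\setminus\Omega_h^i)}^2$ by treating the two contributions separately: the interior part via Theorem~\ref{infsup}, and the remainder via the element-to-element estimate \eqref{aux6a}. Since the hypothesis $q|_{\Omega_h^i}\in L^2_0(\Omega_h^i)$ is exactly the normalization $\int_{\Omega_h^i}q\,dx=0$ required by Theorem~\ref{infsup}, that theorem immediately gives $\theta\,\|q\|_{L^2(\Omega_h^i)}\le\sup_{v\in V_h^i}\frac{\int_{\Omega_h^i}q\dive v\,dx}{\|v\|_{H^1(\Omega_h^i)}}$. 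The first task is to replace the restricted supremum over $V_h^i$ by the full supremum over $V_h$ in the $\|\cdot\|_{V_h}$ norm.

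To do this I would take an arbitrary $v\in V_h^i$, view it as an element of $V_h$ extended by zero and supported in $\overline{\Omega_h^i}$, and observe that it vanishes on every cut simplex and hence on $\Gamma$. Consequently the boundary contributions drop out: $r_h(q,v)=0$, $j_h(v,v)=0$, and $\dive v=0$ on $\Omega\setminus\Omega_h^i$, so that the form in \eqref{b_stab} reduces to $b_h(v,q)=-\int_\Omega q\dive v\,dx+\int_\Gamma q\,v\cdot n\,ds=-\int_{\Omega_h^i}q\dive v\,dx$. It then remains to verify $\|v\|_{V_h}\le C\|v\|_{H^1(\Omega_h^i)}$; the only nontrivial contribution is $\jj(v,v)$, whose summands sit on faces $F\in\mathcal{F}_h^\Gamma$ separating a cut simplex (where $v\equiv0$) from an interior neighbour $T$, and a scaled inverse trace inequality bounds $h_F^{2\ell-1}\int_F[\partial_n^\ell v]^2\,ds$ by $C\|\nabla v\|_{L^2(T)}^2$. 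Testing with $-v$ and using $\|v\|_{H^1(\Omega_h^i)}\ge c\,\|v\|_{V_h}$ yields $\theta\,\|q\|_{L^2(\Omega_h^i)}\le C\sup_{v\in V_h}\frac{b_h(v,q)}{\|v\|_{V_h}}$.

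For the remainder, every point of $\Omega\setminus\Omega_h^i$ lies in a cut simplex, so $\|q\|_{L^2(\Omega\setminus\Omega_h^i)}^2\le\sum_{T\in\mathcal{T}_h^\Gamma}\|q\|_{L^2(T)}^2$. For each $T\in\mathcal{T}_h^\Gamma$, Assumption~\ref{assumption1} provides an interior simplex $K_T$ reachable from $T$ along a chain $T=K_1,\dots,K_M=K_T$ of uniformly bounded length crossing faces of $\mathcal{F}_h^\Gamma$. Applying \eqref{aux6a} once per crossed face telescopes $\|q\|_{L^2(T)}^2$ down to $\|q\|_{L^2(K_T)}^2$ plus jump terms; since all crossed faces lie in $\mathcal{F}_h^\Gamma$ and the chains are short, these are controlled by $J_h(q,q)$. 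Summing over $T\in\mathcal{T}_h^\Gamma$ and using that $T\mapsto K_T$ is boundedly finite-to-one by shape regularity gives $\sum_{T\in\mathcal{T}_h^\Gamma}\|q\|_{L^2(T)}^2\le C\big(\|q\|_{L^2(\Omega_h^i)}^2+J_h(q,q)\big)$.

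Combining the three estimates gives $\|q\|_{L^2(\Omega)}^2\le C\big(\|q\|_{L^2(\Omega_h^i)}^2+J_h(q,q)\big)$, and inserting the bound from the second paragraph together with $\sqrt{a^2+b^2}\le a+b$ produces \eqref{b_stab} with $c_b$ depending only on $\theta$, the constant in \eqref{aux6a}, and the shape regularity of $\mathcal{T}_h$. The main obstacle is the conversion in the second paragraph: one must ensure that the zero-extension of a $V_h^i$ function is admissible in $V_h$ and, crucially, that its full mesh-dependent norm—in particular the ghost-penalty term $\jj(v,v)$ on the cut faces—is dominated by $\|v\|_{H^1(\Omega_h^i)}$ without introducing negative powers of $h$, since any such loss would degrade the constant $c_b$ as $h\to0$.
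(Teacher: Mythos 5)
Your proposal is correct and follows essentially the same route as the paper's proof: control of the cut-region norm by $\|q\|_{L^2(\Omega_h^i)}^2+J_h(q,q)$ via chains of faces in $\mathcal{F}_h^\Gamma$ and the estimate \eqref{aux6a}, then Theorem~\ref{infsup} on $\Omega_h^i$, and finally the replacement of the $V_h^i$-supremum by the $V_h$-supremum through the zero extension, using $j_h(v,v)=0$, $r_h(q,v)=0$, and the inverse-inequality bound $\jj(v,v)\le C\|v\|_{H^1(\Omega_h^i)}^2$ to get $\|v\|_{V_h}\simeq\|v\|_{H^1(\Omega_h^i)}$. The only cosmetic differences are that you spell out the telescoping and the sign adjustment ($-v$) that the paper leaves implicit.
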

\begin{proof} Fix some $q \in Q_h$, such that $q|_{\Omega_h^i}\in L^2_0(\Omega_h^i)$.
Using \eqref{aux6a}, assumption~\ref{assumption1} and the finite overlap argument, one shows
\[
\|q\|_{L^2(\Omega)}^2 \le c\,(\|q\|_{L^2(\Omega_h^i)}^2+J_h(q,q)).
\]
Thanks to the uniform inf-sup property from  Theorem~\ref{infsup} there exists $v\in V_h$ with $\text{supp}(v)\subset\Omega_h^i$ such that
\begin{equation}\label{aux9}
\|q\|_{L^2(\Omega)}^2 \le c\,\Big(\frac{(\dive v,  q)^2}{\|v\|^2_{H^1(\Omega_h^i)}}+J_h(q,q)\Big).
\end{equation}
Using $v=0$ in $\Omega_h^\Gamma$ and applying the FE inverse inequalities we show
\[
\jj(v,v)=\sum_{F \in\mathcal{F}^\Gamma_h,~s.t.~F\subset\partial\Omega_h^i} \sum_{\ell = 1}^s h_F^{2 \ell-1} \int_{F}  \left[\partial_{n}^\ell v\right]^2\,\le
C \sum_{T \in \widetilde{\T}_h^\Gamma\cap\T^i_h}\|\nabla v\|^2_{L^2(T)}\, ds \le C\|v\|^2_{H^1(\Omega_h^i)}.
\]
This estimate  and $j_h(v,v)=0$ for $v\in V_h$ with $\text{supp}(v)\subset\Omega_h^i$ imply the uniform equivalence $\|v\|_{V_h}\simeq \|v\|_{H^1(\Omega_h^i)}$. Using this in \eqref{aux9} yields
 \[
 \|q\|_{L^2(\Omega)}^2 \le c\,\Big(\frac{(\dive v,  q)^2}{\|v\|^2_{V_h}}+J_h(q,q)\Big).
 \]
 Finally, we note  that $(\dive v,  q)=b_h(v, q)$  if $\text{supp}(v)\subset\Omega_h^i$. This completes the proof.
\end{proof}

\section{Well posedness and error estimates} \label{s_error}
One \revjj{easily} verifies that  $a_h$ is continuous
\begin{equation*}%\label{contA}
a_h(u,v)\le C_a\|u\|_{V_h}\|v\|_{V_h}\qquad\forall~u,\,v\in  V_h,
\end{equation*}
with some $C_a>0$ independent of $h$ and the position of $\Gamma$.
The continuity and coercivity of the $a_h(u, v)$ form (Lemma~\ref{l_coerc}) and the inf-sup stability of the $b_h(v, q)$ form (Corollary~\ref{c_b_stab}) readily
imply the stability for the bilinear form of the finite element method~\eqref{FEM} with respect to the product norm,
\begin{equation}\label{infsupA}
C_s\|u_h,p_h\|\le \sup_{\{v,q\}\in V_h\times Q_h}\frac{A_h(u_h,p_h;\,v,q)}{\|v,q\|}\qquad\forall~ \{u_h,p_h\}\in V_h\times Q_h,
\end{equation}
with some $C_s>0$ independent of $h$ and the position of $\Gamma$ and
\[
\begin{split}
A_h(u,p;\,v,q)&:=a_h(u, v)+b_h(v, p)+b_h(u, q)-J_h(p,q),\\
\|v,q\|&:=\left(\|v\|^2_{V_h}+\|q\|_{L^2(\Omega)}^2+J_h(q,q)\right)^{\frac12}.
\end{split}
\]
The proof of \eqref{infsupA} extends standard arguments, cf., e.g., \cite{ern2013theory}, for $J_h\neq0$. For completeness
we sketch the proof here. For given $\{u_h,p_h\}\in  V_h\times Q_h$, thanks to \eqref{b_stab}, one can find $z\in V_h$ such that $\|z\|_{V_h}=\|p_h\|_{L^2(\Omega)}$ and
\[
\begin{split}
c_b \|p_h\|_{L^2(\Omega)}^2 &\le b_h(z, p_h)+J_h^{\frac12}(p_h,p_h)\|p_h\|_{L^2(\Omega)}\\ &=A_h(u_h,p_h;\,z,0)-a_h(u_h, z)+J_h^{\frac12}(p_h,p_h)\|p_h\|_{L^2(\Omega)}\\
&\le A_h(u_h,p_h;\,z,0)+\frac{C_a^2}{c_b}\|u_h\|^2_{V_h}+\frac{c_b}4\|p_h\|_{L^2(\Omega)}^2
+\frac{1}{c_b}J_h(p_h,p_h)+\frac{c_b}4\|p_h\|_{L^2(\Omega)}^2.
\end{split}
\]
Combining this inequality with
\[
J_h(p_h,p_h)+c_0\|u_h\|^2_{V_h}\le A_h(u_h,p_h;\,u_h,{-p_h}),
\]
we get
\[
c\,\|u_h,p_h\|^2\le A_h(u_h,p_h;\,u_h+\alpha z,{-p_h}),
\]
for a suitable $\alpha>0$ and a constant $c\,>0$ depending only on $c_b$, $C_a$, and $c_0$. Inequality \eqref{infsupA} follows by noting $\|u_h,p_h\|\ge \frac{1}{1+\alpha}\|v,q\|$, with $v=u_h-\alpha z$, $q={-p_h}$.
\medskip

One verifies that  $A_h$ is continuous
\begin{equation}\label{contA}
A_h(u,p;\,v,q)\le C_c\|u,p\|\|v,q\|\qquad\forall~\{u,p\},\,\{v,q\}\in  V_h\times Q_h,
\end{equation}
with some $C_c>0$ independent of $h$ and the position of $\Gamma$.
Note also that $A_h$ is symmetric. Therefore, by the  {Banach--Ne\v{c}as--Babu\v{s}ka} theorem {(see, e.g., Theorem~2.6 in \cite{ern2013theory})} the problem~\eqref{FEM} is well-posed and its solution satisfies the stability bound
\[
\|u_h,p_h\|\le C_s^{-1}\|f\|_{V_h'}.
\]

Further in this section we assume  that the solution to the Stokes problem is sufficiently smooth, i.e., $u \in H^{s+1}(\Omega)$ and $p \in H^{k_p+1}(\Omega)$. Since we are assuming that $\Gamma$ is Lipschitz there exist extensions of $u$ and $p$, which we also denote by $u$, $p$, such that $u \in  H^{s+1}(S)$ and $p \in H^{k_p+1}(S)$ (see \cite{SteinBook}). We  let $I_h u$ be the Scott-Zhang interpolant of $u$ onto $\left[W_h^{k_u}\right]^d$. We also let $I_h  p$ be the Scott-Zhang interpolant of $p$ in the case $Q_h=Q_h^{\rm cont} $ and the $L^2$ projection onto discontinuous piecewise polynomials of degree $k_p$ if $Q_h=Q_h^{\rm disc}$. For the pressure interpolant we can always assume $(I_h  p)|_{\Omega_h^i}\in L_0^2(\Omega_h^i)$ by choosing
a suitable additive constant in the definition of $p$.
Applying trace inequalities \eqref{oldtrace} and \eqref{trace}, standard approximation properties of $I_h$,   and extension results one obtains the approximation property in the product norm:
\begin{equation}\label{approxA}
\|u-I_hu,p-I_hp\|\le C\, \left(\,h^{\min\{k_u,k_p+1\}} (\|u\|_{H^{k_u+1}(\Omega)}+ \|p\|_{H^{k_p+1}(\Omega)}) + h^{k_u}\sum_{\ell=k_u+1}^{s+1}h^{\ell-k_u-1} \|u\|_{H^{\ell}(\Omega)}\right).
\end{equation}
We also have the following continuity result and approximation results:
\begin{align}\label{contA2}
A_h(u-I_h u,p-I_h p;\,v,q)&\le C\,\|u-I_h,p-I_h p\|\|v,q\|\\
&\qquad +|s_h(u-I_h u,v)|+|r_h(p-I_h p,v)|,\notag\\
|s_h(u-I_h u,v)|+|r_h(p-I_h p,v)|&\le C\,h^{\min\{k_u,k_p+1\}} (\|u\|_{H^{k_u+1}(\Omega)}+ \|p\|_{H^{k_p+1}(\Omega)})\|v\|_{V_h},\label{contA3}
\end{align}
for all $\{v,q\}\in  V_h\times Q_h$. Here we used \eqref{trace}, \eqref{oldtrace}, \eqref{aux7}.

Denote by $ e_u=u-u_h$ and $e_p=p-p_h$ the finite element error functions.
\revm{Note that for $u \in H^{s+1}(S)$ and $p \in H^{k_p+1}(S)$ the jumps of derivatives in bilinear forms $\mathbf{j}_h$ and $J_h$ vanish. This and the boundary condition $u|_\Gamma=0$ imply
$\mathbf{j}_h(u,v_h)={j}_h(u,v_h)=J_h(p,q_h)=0$ and $s_h(u,v_h)=-\int_\Gamma(n\cdot\nabla u)\cdot v_h$.
Hence, it is easy to see that the method \eqref{FEM} is consistent for $u$ and $p$ sufficiently
smooth as stated above, i.e. \eqref{FEM} is satisfied with $u_h$ replaced by $u$.
Therefore,} the Galerkin orthogonality holds,
\begin{equation} \label{Galerkin}
A_h(e_u,e_p;\,v_h,q_h)=0,
\end{equation}
for all $v_h \in V_h$ and $ q_h \in Q_h$.

The optimal order error estimate in the energy norm is given in the next theorem.

\begin{theorem}\label{Th1} For sufficiently smooth $u,p$ solving \eqref{Stokes} and $u_h,p_h$ solving \eqref{FEM}, the error estimate
holds,
\begin{multline*}
|u-u_h|_{H^1(\Omega)}+ \|p-p_h\|_{L^2(\Omega)} \le \|u-u_h,p-p_h\|\\ \le \,C \left(\,h^{\min\{k_u,k_p+1\}} (\|u\|_{H^{k_u+1}(\Omega)}+ \|p\|_{H^{k_p+1}(\Omega)}) + h^{k_u}\sum_{\ell=k_u+1}^{s+1}h^{\ell-k_u-1} \|u\|_{H^{\ell}(\Omega)}\right),
\end{multline*}
with a constant $C$ independent of $h$ and the position of $\Gamma$ with respect to the triangulation $\T_h$.
\end{theorem}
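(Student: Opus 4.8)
The plan is to follow the standard Strang-type argument for stabilized mixed methods, combining the discrete inf-sup stability \eqref{infsupA}, the Galerkin orthogonality \eqref{Galerkin}, and the approximation and continuity estimates \eqref{approxA}--\eqref{contA3} that have already been assembled. I would begin by splitting the error through the interpolants: setting $\xi_u := I_h u - u_h \in V_h$ and $\xi_p := I_h p - p_h \in Q_h$, the triangle inequality in the product norm gives $\|e_u, e_p\| \le \|u - I_h u, p - I_h p\| + \|\xi_u, \xi_p\|$. The interpolation part is already controlled at the claimed rate by \eqref{approxA}, so it suffices to bound the discrete part $\|\xi_u, \xi_p\|$.

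Next I would apply the discrete inf-sup stability \eqref{infsupA} to the pair $\{\xi_u, \xi_p\} \in V_h \times Q_h$, producing a test pair $\{v, q\} \in V_h \times Q_h$, which I may normalize to $\|v, q\| = 1$, such that $C_s \|\xi_u, \xi_p\| \le A_h(\xi_u, \xi_p;\, v, q)$. The key simplification is to write $\xi_u = (I_h u - u) + e_u$ and $\xi_p = (I_h p - p) + e_p$ and exploit bilinearity of $A_h$: then $A_h(\xi_u, \xi_p;\, v, q) = A_h(I_h u - u, I_h p - p;\, v, q) + A_h(e_u, e_p;\, v, q)$, and the second term vanishes by the Galerkin orthogonality \eqref{Galerkin}, precisely because $\{v, q\} \in V_h \times Q_h$. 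This leaves $C_s \|\xi_u, \xi_p\| \le -A_h(u - I_h u, p - I_h p;\, v, q)$.

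I would then estimate this remaining term. Since $u - I_h u$ and $p - I_h p$ are not finite element functions, the plain continuity \eqref{contA} does not apply; instead I would invoke the modified continuity \eqref{contA2}, which isolates the nonconsistent boundary contributions $s_h(u - I_h u, v)$ and $r_h(p - I_h p, v)$, and bound these directly by \eqref{contA3}. Feeding in \eqref{approxA} and the normalization $\|v,q\| = 1$ yields $\|\xi_u, \xi_p\| \le C\,(h^{\min\{k_u,k_p+1\}}(\|u\|_{H^{k_u+1}(\Omega)} + \|p\|_{H^{k_p+1}(\Omega)}) + h^{k_u}\sum_{\ell=k_u+1}^{s+1}h^{\ell-k_u-1}\|u\|_{H^{\ell}(\Omega)})$. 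Combining this with the triangle-inequality split gives the stated bound on $\|u - u_h, p - p_h\|$, and the first inequality of the theorem, $|u-u_h|_{H^1(\Omega)} + \|p-p_h\|_{L^2(\Omega)} \le \|u-u_h, p-p_h\|$, is then immediate from the definitions of the product norm and of $\|\cdot\|_{V_h}$.

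The main obstacle, and the reason \eqref{contA2}--\eqref{contA3} are needed as separate ingredients rather than the uniform continuity \eqref{contA}, is the nonconsistency introduced by the Nitsche boundary terms $s_h$ and $r_h$: the normal-derivative trace appearing in $\int_{\Gamma}(n\cdot\nabla(u-I_h u))\cdot v\,ds$ cannot be controlled by the product norm of the interpolation error alone, and must instead be estimated through the scaled trace inequality \eqref{trace}, a finite element inverse estimate, and the approximation properties of $I_h$, which is exactly what \eqref{contA3} records. Once these boundary contributions are handled, the remainder is a routine combination of the already-established inf-sup stability, Galerkin orthogonality, and approximation bounds.
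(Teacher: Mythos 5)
Your proposal is correct and is essentially the paper's proof written out in full: the paper's argument for Theorem~\ref{Th1} consists of citing exactly the ingredients you use---the discrete inf-sup stability \eqref{infsupA}, the modified continuity \eqref{contA2} with the Nitsche boundary terms controlled by \eqref{contA3}, Galerkin orthogonality \eqref{Galerkin}, and the approximation bound \eqref{approxA}---combined ``by standard arguments'' in the sense of Section~2.3 of the cited reference, which is precisely the interpolant-splitting and inf-sup/orthogonality argument you carry out. Your explicit treatment of why \eqref{contA} cannot be applied to the non-discrete interpolation error, and why \eqref{contA2}--\eqref{contA3} are needed instead, correctly identifies the one nonstandard point the paper's citation glosses over.
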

\begin{proof}
The results follows from the inf-sup stability \eqref{infsupA}, continuity  \eqref{contA2}, Galerkin orthogonality \eqref{Galerkin}, and
approximation properties \eqref{approxA}, \eqref{contA3}, by standard arguments, see, for example, section 2.3 in \cite{ern2013theory}.
\end{proof}

Using the Aubin-Nitsche duality argument one shows the optimal order error estimate for the velocity in $L^2(\Omega)$-norm.
Consider the dual adjoint problem. Let $w\in \left[H^1_0(\Omega)\right]^d$ and $r\in L^2_0(\Omega)$ be the solution to the problem
\begin{equation}\label{Stokes2}
\left\{
\begin{aligned}
    -\Delta w- \nabla r &= e_u \qquad &\mbox{in } &\Omega, \\
                                           \dive w &=0            & \mbox{in } &\Omega, \\
                            w&=0            & \mbox{on }& \partial \Omega.
\end{aligned}
\right.
\end{equation}
We assume that $\Omega$ is such that \eqref{Stokes2} is $H^2$-regular, i.e. for $e_u\in\left[L^2(\Omega)\right]^d$ it holds
$w\in \left[H^2(\Omega)\right]^d$ and $r\in H^1(\Omega)$ and
\[
\|w\|_{H^2(\Omega)}+\|r\|_{H^1(\Omega)}\le C(\Omega)\|e_u\|_{L^2(\Omega)}.
\]
By the standard arguments (section 2.3 in \cite{ern2013theory}) the results in \eqref{infsupA},  \eqref{contA}, \eqref{Galerkin}, \eqref{approxA}, and the above regularity assumption lead to the following theorem.

\begin{theorem}\label{Th2} For sufficiently smooth $u,p$ solving \eqref{Stokes} and $u_h,p_h$ solving \eqref{FEM}, the error estimate
holds,
\begin{equation*}
|u-u_h|_{L^2(\Omega)}\le \,C h \|u-u_h,p-p_h\|,
\end{equation*}
with a constant $C$ independent of $h$ and the position of $\Gamma$ with respect to the triangulation $\T_h$.
\end{theorem}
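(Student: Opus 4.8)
The plan is to run a standard Aubin--Nitsche duality argument, the only nonstandard features being the Nitsche and ghost-penalty consistency terms and the fact that $e_u=u-u_h$ does not vanish on $\Gamma$. First I would test the adjoint problem \eqref{Stokes2} with $e_u$ and integrate by parts over $\Omega$. Since the velocity space is globally continuous we have $e_u\in[H^1(\Omega)]^d$, so the integration by parts is legitimate, but because $e_u|_\Gamma\neq 0$ the boundary contributions on $\Gamma$ survive. Using $\dive w=0$ and $w|_\Gamma=0$ one arrives at the identity
\[
\|e_u\|_{L^2(\Omega)}^2=(\nabla e_u,\nabla w)+s_h(e_u,w)+b_h(-r,e_u),
\]
where $s_h(e_u,w)=-\int_\Gamma(n\cdot\nabla w)\cdot e_u\,ds$ (the other half of $s_h$ dropping out since $w|_\Gamma=0$) and $b_h(-r,e_u)=(\dive e_u,r)-\int_\Gamma r\,(e_u\cdot n)\,ds$. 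I would also note that $j_h(e_u,w)=0$ and $b_h(e_p,w)=0$, so these vanishing quantities may be appended for free; this is what turns the $b_h(e_p,\cdot)$ term below into an interpolation error.

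Next I would introduce the Scott--Zhang interpolant $I_h w\in V_h$ (legitimate since $w\in H^2$) and an interpolant $I_h r\in Q_h$, and subtract from the identity above the Galerkin orthogonality relation \eqref{Galerkin} in the form $A_h(e_u,e_p;I_h w,-I_h r)=0$. Expanding $A_h$ and regrouping rewrites $\|e_u\|_{L^2(\Omega)}^2$ as a sum in which each term carries an interpolation error $w-I_h w$ or $r-I_h r$, or a pure stabilization consistency contribution:
\begin{multline*}
\|e_u\|_{L^2(\Omega)}^2=(\nabla e_u,\nabla(w-I_h w))+s_h(e_u,w-I_h w)+b_h(-(r-I_h r),e_u)\\
-b_h(e_p,I_h w-w)-\jj(e_u,I_h w)-\eta\,j_h(e_u,I_h w)-J_h(e_p,I_h r).
\end{multline*}
The remaining task is to bound each term by $C h\,\|e_u,e_p\|\,\|e_u\|_{L^2(\Omega)}$, after which I divide and invoke the regularity bound $\|w\|_{H^2(\Omega)}+\|r\|_{H^1(\Omega)}\le C\|e_u\|_{L^2(\Omega)}$.

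For the bounds I would use the continuity \eqref{contA}, the approximation estimate \eqref{approxA}, and the scaled trace inequalities \eqref{oldtrace}, \eqref{trace}. The volume terms are routine via $\|\nabla(w-I_h w)\|\le Ch|w|_{H^2}$ and $\|r-I_h r\|\le Ch|r|_{H^1}$. The boundary integrals in $s_h$ and $b_h$ are more delicate: because $e_u|_\Gamma\neq0$ I would pair the $O(h^{3/2})$ and $O(h^{1/2})$ approximation of $w-I_h w$ and $\nabla(w-I_h w)$ on $\Gamma$ against $\|\nabla e_u\|_{L^2(\Gamma)}$ and $\|e_u\|_{L^2(\Gamma)}$, controlling the latter through \eqref{trace}, \eqref{aux7}, and the $j_h$-part of the $V_h$-norm to recover the missing half power of $h$. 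For the stabilization terms I would apply Cauchy--Schwarz in each form together with $\jj(e_u,e_u)^{1/2}\le\|e_u\|_{V_h}$ and $J_h(e_p,e_p)^{1/2}\le\|e_u,e_p\|$, and the consistency estimates $\jj(I_h w,I_h w)^{1/2}+j_h(I_h w,I_h w)^{1/2}\le Ch\|w\|_{H^2}$ and $J_h(I_h r,I_h r)^{1/2}\le Ch\|r\|_{H^1}$.

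The hard part will be this last point, the adjoint consistency of the stabilizations. Since the dual solution is only $H^2\times H^1$-regular, the forms $\jj$ and $J_h$ cannot be evaluated on $(w,r)$ directly, so the whole argument must pass through the interpolants, and I must verify that $\jj(I_h w,I_h w)$, $j_h(I_h w,I_h w)$ and $J_h(I_h r,I_h r)$ are genuinely $O(h^2)$ in the squared norm despite the limited regularity. Here the lowest-order (normal-trace and first normal-derivative) jumps vanish for the exact $(w,r)$ and are handled by approximation and $w|_\Gamma=0$, while the higher-order normal-derivative jumps, for which no exact trace exists, must be absorbed by FE inverse inequalities against $\|w\|_{H^2}$ and $\|r\|_{H^1}$. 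Collecting all estimates yields $\|e_u\|_{L^2(\Omega)}^2\le Ch\,\|e_u,e_p\|\,\|e_u\|_{L^2(\Omega)}$, and dividing gives the claim.
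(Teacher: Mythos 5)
Your proposal is correct and follows essentially the same route as the paper: the paper proves Theorem~\ref{Th2} by simply invoking the standard Aubin--Nitsche duality argument (Section~2.3 of \cite{ern2013theory}) together with \eqref{infsupA}, \eqref{contA}, \eqref{Galerkin}, \eqref{approxA} and the $H^2$-regularity assumption on \eqref{Stokes2}, which is precisely the argument you unfold in detail. Your extra care with the boundary terms in $s_h$ and $b_h$ and with evaluating $\jj$, $j_h$ and $J_h$ only on interpolants of the merely $H^2\times H^1$-regular dual pair $(w,r)$ matches the paper's own handling of these consistency terms for the primal interpolation errors in \eqref{contA2}--\eqref{contA3}.
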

\medskip

\section{Example of spaces satisfying Assumption \ref{assumption2}}\label{s_spaces}

\subsection{Generalized Taylor-Hood elements}\label{s_TH}
 Consider $Q_h=Q_h^{\rm cont}:=W_h^{k}$ and $V_h=\left[W_h^{k+1}\right]^d$, $k\ge1$. %One has to assume that each $T \in \mathcal{T}_h^i$ has to have at least one node in the interior of $\Omega_h^i$.
In this case, the proof of estimate~\eqref{assumption2inq} from Assumption~\ref{assumption2} is given in section~8 of \cite{brezzi2008mixed} for $d=2$ (two-dimensional case). In three-dimensional case and $k=1$, the result can be found in Lemma~4.23 in \cite{ern2013theory}. Below we extend the proof for all $k\ge1$ in 3D.  We require each $T \in \mathcal{T}_h^i$  to have at least three edges in the interior of $\Omega_h^i$. Note that the proof in \cite{brezzi2008mixed} for $d=2$ does not need a similar assumption. For any edge from the set of internal edges of $\mathcal{T}_h^i$, $E\in \mathcal{E}_h^i$, we denote a unit tangent vector be $t_E$ (any of two, but fixed), $x_E$ is the  midpoint of $E$, and $\omega(E)$ is a set of tetrahedra sharing $E$.
Also we denote by $\phi_E\in W_h^2(\Omega_h^i)$ a piecewise quadratic function such that
$\phi_E(x_E)=1$ and  $\phi_E(x)=0$, where $x$ is any vertex or a midpoint of any other edge from $\mathcal{E}_h^i$. For $p\in Q_h$ we set
\[
v(x)=-\sum_{E\in\mathcal{E}_h^i} h_E^2\phi_E(x)\, [t_E\cdot\nabla p(x)]t_E.
\]
Since the pressure tangential derivative $t_E\cdot\nabla p$ is continuous across faces $F$ that contain $E$, it is easy to see that $v\in V^i_h$.
We compute
\[
\int_{\Omega_h^i} \dive v \, p \,dx= - \int_{\Omega_h^i} v \cdot \nabla p\, dx  =
\sum_{E\in\mathcal{E}_h^i} h_E^2\int_{\omega(E)} \phi_E |t_E\cdot\nabla p|^2\,dx\ge c\, \sum_{E\in\mathcal{E}_h^i} h_E^2\int_{\omega(E)} |t_E\cdot\nabla p|^2\,dx.
\]
The constant $c>0$ in the last inequality depends only on the polynomial degree $k$ and shape regularity condition \eqref{shaperegularity}. From the condition \eqref{shaperegularity} we also infer  $h_E\simeq h_T$ for $T\in\omega(E)$. This gives
after rearranging terms the estimate
\[
\int_{\Omega_h^i} \dive v \, p \,dx\ge c\, \sum_{T\in\mathcal{T}_h^i}\sum_{E\in \overline{T}\cap\Omega_h^i} h_T^2\int_{T} |t_E\cdot\nabla p|^2\,dx\ge c\, \sum_{T\in\mathcal{T}_h^i} h_T^2\int_{T} |\nabla p|^2\,dx=c\|p\|^2_{H^1_{h,i}}.
\]
For the last inequality we used the assumption that at least three edges of the tetrahedra are internal and we apply  the shape regularity condition one more time. Due to the finite element inverse inequalities and the obvious estimate $ |\phi_E|+h_T|\nabla\phi_E|\le c\,$ on $T$, with $E\in \overline{T}$,  we have
\begin{multline*}
\|v\|_{H^1(\Omega_h^i)}^2\le c\int_{\Omega_h^i} |\nabla v|^2\,dx\le
\sum_{T\in\mathcal{T}_h^i}\sum_{E\in \overline{T}\cap\Omega_h^i} h_T^4\int_{T} (|\nabla\phi_E|^2 |\nabla p|^2 + |\phi_E|^2 |\nabla^2 p|^2)\,dx\\
\le
\sum_{T\in\mathcal{T}_h^i} \int_{T} h_T^2(|\nabla p|^2 + h^2_T|\nabla^2 p|^2)\,dx
 \le c \|p\|_{H^1_{h,i}}^2.
\end{multline*}
 This shows \eqref{assumption2inq}.

\subsection{Bercovier-Pironneau element} This is a `cheap' version of the lowest order Taylor-Hood element. In 2D the element in defined in \cite{bercovier1979error}, the 3D version can be found, e.g., in \cite{ern2013theory}.
To define the velocity space, one refines each triangle of $\mathcal{T}_h$ by connecting midpoints on the edges in 2D, while in 3D
one divides a tetrahedron into six tetrahedra by the same procedure.
Then the velocity space consists of piecewise linear continuous function with respect to the refined triangulation, $V_h=\left[W_{h/2}^{1}\right]^d$, and $Q_h=Q_h^{\rm cont}:=W_h^{1}$. For this element, one shows \eqref{assumption2inq} following the lines of the proof of Theorem 8.1 in \cite{brezzi2008mixed} for $k=1$ in 2D or the arguments from the section~\ref{s_TH} with obvious modifications: For example, in the 3D case one   substitutes `edge-bubbles' $\phi_E$  by there P1isoP2 counterparts.

\subsection{$P_{k+2}-P_{k}^{\rm disc}$   (for $d=2$) and $P_{k+3}-P_{k}^{\rm disc}$ (for $d=3$) elements}
We only consider the two dimensional case $d=2$ as the case $d=3$ is similar.
We let $Q_h=Q_h^{\rm disc}$ be the space of piecewise polynomial functions of degree $k$  and let $V_h=\left[W_h^{k+2}\right]^2$.  \revjj{The canonical degrees of freedom of a function $m \in P_{k+2}(T)$ are given by
\begin{alignat*}{1}
\int_T m \, s \, dx & \quad \text{ for all } s \in P_{k-1}(T) \\
\int_E  m \,  q  \, dx & \quad \text{ for all edges }  E \text{ of } T, q \in P_{k}(E) \\
 m(x) & \quad \text{ for all the vertices } x \text{ of } T.
\end{alignat*}
}
To show Assumption  \ref{assumption2} holds in this case, take $q \in Q_h^{\rm disc}$. We can choose $v \in V_h^i$  (\revjj{using the degrees of freedom above}) such that
\begin{equation*}
\int_T v \cdot w\, dx= -h_T^2 \int_{T} \nabla q \cdot w\, dx \quad \text{ for all } w \in P_{k-1}(T) ,
\end{equation*}
and for all $T \in \mathcal{T}_h^i$.
Also for every interior edges $E$ of $\Omega_h^i$
\begin{equation*}
\int_E  r v \cdot n^+ ds= h_E \int_{E}  r (q^+ n^++ q^- n^-) \cdot n^+\, dx   \quad \text{ for all } r \in P_{k}(E) ,
\end{equation*}
where $E=\partial T^+\cap \partial T^-$ and   $T^+, T^- \in \mathcal{T}_h^i$. Also, $n^\pm $ is the outward pointing unit normal of $T^{\pm}$.

To pin down $v \in V_h^i$ we make $v$ vanish on all vertices and have tangential components vanish on all edges. Finally, we make $v \equiv 0$ on $\partial \Omega_h^i$.

\revj{
Using elementwise integration by parts, we get
\begin{equation*}
\int_{\Omega_h^i} \dive v \, q\, dx= \sum_{T \in \mathcal{T}_h^i} - \int_{T} v \cdot \nabla q dx+ \int_{\partial T} q v \cdot n ds.
\end{equation*}
From the construction of $v$, we see that
\begin{equation*}
\int_{\Omega_h^i} \dive v \, q\, dx= \sum_{T \in \mathcal{T}_h^i} h_T^2 \| \nabla q \|_{L^2(T)}^2 + \sum_{E \in \mathcal{E}_h^i} h_F \|[q]\|_{L^2(F)}^2=|q|_{H_h^i}^2
\end{equation*}
}
It is not difficult to show, using a scaling argument that $\|v\|_{H^1(\Omega_h^i)} \le C |q|_{H_h^i}$. From this we see that Assumption \ref{assumption2} holds.

\subsection{Bernardi-Raugel element}

In a similar fashion we can show that the Bernardi-Raugel spaces satisfy Assumption \ref{assumption2}.
The space of Bernardi-Raugel elements consists of  piecewise constant  pressure and for
the velocity one takes  $P^1$ continuous functions enriched with the normal components of the velocity
as a degree of freedom at barycentre face nodes~\cite{bernardi1985analysis}.

\subsection{Mini-Element}
\revm{Let $\Omega\subset\mathbb{R}^2$.} With $k_p=1$,  $Q_h=Q_h^{\rm cont}$ and $V_h= [W_h^1]^2+\{ v: v|_T \in b_T c_T, \text{ where } c_T \in [\mathbf{P}^0(T)]^2, \text{ for all } T \in \mathcal{T}_h^e \}$. Here $b_T$ is the cubic bubble.

To prove~\eqref{assumption2inq} we consider an arbitrary $q \in Q_h$. A simple argument gives
\begin{equation*}
\sum_{ T \in \mathcal{T}_h^i} h_T^2 \|\nabla q \|_{L^2(T)}^2\le C \, \sum_{ T \in \mathcal{T}_h^i} h_T^2 \|\sqrt{b_T} \revm{\nabla q} \|_{L^2(T)}^2.
 \end{equation*}

Integration by parts gives $\|\sqrt{b_T} \nabla q \|_{L^2(T)}^2= - \int_T \dive (b_T \nabla q) q dx$. If we define $w_h \in V_h^i$ in the following way $w_h|_T: =- h_T ^2 b_T \nabla q |_T$ then we have
 \begin{equation*}
 \sum_{ T \in \mathcal{T}_h^i} h_T^2 \|\sqrt{b_T} \nabla q \|_{L^2(T)}^2= \int_{\Omega_h^i} \dive w_h \; q dx.
\end{equation*}
Hence, we get
\begin{equation*}
\sum_{ T \in \mathcal{T}_h^i} h_T^2 \|\nabla q \|_{L^2(T)}^2  \le C \, \sup_{v \in V_h^i} \frac{(\dive v,  q)}{\|v\|_{H^1(\Omega_h^i)}}  \|w_h\|_{H^1(\Omega_h^i)}.
\end{equation*}
Now, using Poincare's inequality
\begin{equation*}
\|w_h\|_{H^1(\Omega_h^i)}^2 \le C \sum_{T \in \mathcal{T}_h^i} \| \nabla w_h\|_{L^2(T)}^2  \le C \sum_{T \in \mathcal{T}_h^i} h_T^4 \|\nabla b_T\|_{L^\infty(T)}^2 \| \nabla q\|_{L^2(T)}^2.
\end{equation*}
Since $h_T^2 \|\nabla b_T\|_{L^\infty(T)}^2 \le C$, we get
\begin{equation*}
\|w_h\|_{H^1(\Omega_h^i)}^2 \le \sum_{ T \in \mathcal{T}_h^i} h_T^2 \|\nabla q \|_{L^2(T)}^2.
\end{equation*}
The result now follows.

\subsection{Generalized conforming Crouzeix-Raviart element}

This element is defined by $Q_h=Q_h^{\rm disc}$ with $k_p=k \ge 1$ for $d=2$ or $k\ge 2$ for $d=3$, we define the velocity space to be $V_h=[W_h^{k+1}]^d+ \{ v:  v|_T \in  b_T  \nabla \mathbf{P}^k(T), \text{ for all } T \in \mathcal{T}_h^e\}$,
where $b_T$ is cubic bubble in two dimensions or quartic bubble in three dimensions.  This $P_{k+1}^{\rm bubble} -P_k^{\rm disc}$ spaces was first introduced in \cite{crouzeix1973conforming}.
The proof of \eqref{assumption2inq} in this case will be similar to that of mini-elment. We leave the details to the reader.

\subsection*{Acknowledgements} We would like to thank anonymous referees for valuable suggestions, which
stimulate us to weaken  assumptions in section~\ref{s_infsup} and lead to a better presentation.

% ----------------------------------------
\bibliography{Bibliography_BGSS}{}
\bibliographystyle{abbrv}
% ----------------------------------------

\appendix

\section{Proof of Lemma  \ref{lemmatrace}}

First we state a result found for example in \cite{grisvard2011elliptic,brenner2007mathematical} that makes use that $\Gamma$ is Lipschitz.

\begin{proposition}\label{traceprop}
There exists a constant $C$
\begin{equation*}
\|v\|_{L^2(\Gamma)}^2 \le C \|v\|_{H^1(\Omega)} \|v\|_{L^2(\Omega)}  \quad \text{ for all } v \in H^1(\Omega).
\end{equation*}
\end{proposition}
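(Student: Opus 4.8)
The plan is to establish this multiplicative trace inequality through the standard vector-field (Rellich-type) identity, which isolates the role of the Lipschitz assumption into a single geometric construction. First I would reduce to $v \in C^\infty(\overline\Omega)$. Once the estimate is known for smooth functions, applying it to the differences $v_k - v_j$ of a sequence $v_k \in C^\infty(\overline\Omega)$ converging to $v$ in $H^1(\Omega)$ shows that the boundary traces $v_k|_\Gamma$ form a Cauchy sequence in $L^2(\Gamma)$; their limit defines the trace of $v$, and the inequality passes to the limit. This makes the argument self-contained and avoids invoking boundedness of the trace operator a priori.

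The key ingredient is a vector field $b \in [W^{1,\infty}(\Omega)]^d$ satisfying $b \cdot n \ge c_0 > 0$ for almost every point of $\Gamma$, where $n$ is the outward unit normal (which exists a.e. since $\Omega$ is Lipschitz). Such a field exists precisely because $\Gamma$ is Lipschitz: one covers $\Gamma$ by finitely many charts in which it is the graph of a Lipschitz function, and in each chart the coordinate direction transversal to the graph has a uniformly positive inner product with $n$; gluing these local directions by a partition of unity subordinate to the cover produces a global Lipschitz field $b$ with the required uniform lower bound. I expect this construction to be the main obstacle, since it is the only place where the Lipschitz regularity of $\Omega$ is genuinely used and where the constant $c_0$ must be extracted uniformly.

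With $b$ in hand, I would apply the Gauss--Green formula (valid for Lipschitz $\Omega$ and the field $v^2 b \in [W^{1,\infty}(\Omega)]^d$) to obtain
\begin{equation*}
\int_\Gamma v^2\,(b\cdot n)\,ds = \int_\Omega \dive(v^2 b)\,dx = \int_\Omega \left(2\,v\,(\nabla v \cdot b) + v^2\,\dive b\right)dx.
\end{equation*}
The left-hand side is bounded below by $c_0\,\|v\|_{L^2(\Gamma)}^2$, while the right-hand side is bounded above, via Cauchy--Schwarz, by
\begin{equation*}
2\,\|b\|_{L^\infty(\Omega)}\,\|v\|_{L^2(\Omega)}\,\|\nabla v\|_{L^2(\Omega)} + \|\dive b\|_{L^\infty(\Omega)}\,\|v\|_{L^2(\Omega)}^2.
\end{equation*}
Using the elementary bounds $\|\nabla v\|_{L^2(\Omega)} \le \|v\|_{H^1(\Omega)}$ and $\|v\|_{L^2(\Omega)} \le \|v\|_{H^1(\Omega)}$ collapses the upper bound into $C\,\|v\|_{H^1(\Omega)}\,\|v\|_{L^2(\Omega)}$ with $C = c_0^{-1}\big(2\|b\|_{L^\infty(\Omega)} + \|\dive b\|_{L^\infty(\Omega)}\big)$, which is exactly the claimed inequality.

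Finally I would note that the constant $C$ depends only on the Lipschitz character of $\Omega$ through $b$, so it is fixed once $\Omega$ is given, and that the density step of the first paragraph upgrades the inequality from $C^\infty(\overline\Omega)$ to all of $H^1(\Omega)$. The only delicate point throughout is the construction and uniform transversality of $b$; the remaining estimates are routine applications of the divergence theorem and Cauchy--Schwarz.
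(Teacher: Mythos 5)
Your proof is correct, and it is essentially the standard argument: the paper does not prove Proposition \ref{traceprop} itself but cites it to \cite{grisvard2011elliptic,brenner2007mathematical}, and the proof given there is precisely your Rellich/Ne\v{c}as vector-field identity --- construct a Lipschitz field $b$ with $b\cdot n\ge c_0>0$ a.e.\ on $\Gamma$ via graph charts and a partition of unity, apply the divergence theorem to $v^2 b$, and conclude by Cauchy--Schwarz and density of $C^\infty(\overline\Omega)$ in $H^1(\Omega)$. All steps, including the transversality construction and the limit passage defining the trace, are sound, so there is nothing to correct.
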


For the moment we assume the following result.
\begin{lemma}\label{extensionT}
Let $T \in \mathcal{T}_h$. There exists an extension operator $R_T: H^1(T) \rightarrow H^1(R^d)$ such that
\begin{alignat}{1}
R_T v=& v \quad \text{ on } T \label{extensionT1}\\
\|R_T v\|_{L^2(R^d)} + h_T \|\nabla R_T v\|_{L^2(R^d)} \le &   C (\|v\|_{L^2(T)} + h_T \|\nabla v\|_{L^2(T)}) \label{extensionT2}
\end{alignat}
where the constant $C$ is independent of $T$ and $v$.
\end{lemma}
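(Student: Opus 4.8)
The plan is to reduce everything to a classical extension theorem on a single fixed reference simplex via an affine change of variables, and to recover all $h_T$-dependence by a scaling argument. Let $\hat T$ denote the reference (unit) simplex in $\mathbb{R}^d$, and recall that every $T \in \mathcal{T}_h$ is the image of $\hat T$ under an invertible affine map $F_T(\hat x) = B_T \hat x + b_T$. Since $\hat T$ is a fixed bounded Lipschitz domain, a standard extension theorem (e.g. the Stein or Calder\'on extension operator) provides a bounded linear operator $R_{\hat T}: H^1(\hat T) \to H^1(\mathbb{R}^d)$ with $R_{\hat T}\hat v = \hat v$ on $\hat T$ and $\|R_{\hat T}\hat v\|_{H^1(\mathbb{R}^d)} \le C_{\hat T}\|\hat v\|_{H^1(\hat T)}$, where $C_{\hat T}$ depends only on $\hat T$. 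I would then define the operator by pullback and pushforward,
\[
R_T v := (R_{\hat T}\, \hat v)\circ F_T^{-1}, \qquad \hat v := v\circ F_T,
\]
noting that $F_T$ extends to an affine bijection of $\mathbb{R}^d$, so $R_T v$ is defined on all of $\mathbb{R}^d$. Property \eqref{extensionT1} is then immediate: on $T$ one has $F_T^{-1}(T)=\hat T$ and $R_{\hat T}\hat v = \hat v$ on $\hat T$, hence $R_T v = \hat v\circ F_T^{-1} = v$.

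The next step is to track norms under $F_T$. By the minimal-angle condition \eqref{shaperegularity}, the diameter $h_T$ and inradius $\rho_T$ satisfy $\rho_T \ge h_T/\kappa$, so \emph{all} singular values of $B_T$ are comparable to $h_T$; equivalently $\|B_T\|\lesssim h_T$, $\|B_T^{-1}\|\lesssim h_T^{-1}$, and $|\det B_T|\simeq h_T^d$, with constants depending only on $\kappa$. The usual change-of-variables computations then give, for any $w\in H^1(T)$,
\[
\|w\|_{L^2(T)} \simeq h_T^{d/2}\,\|\hat w\|_{L^2(\hat T)},\qquad \|\nabla w\|_{L^2(T)} \simeq h_T^{d/2-1}\,\|\nabla \hat w\|_{L^2(\hat T)},
\]
together with the analogous equivalences on $\mathbb{R}^d$ relating $R_T v$ to $R_{\hat T}\hat v$. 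Consequently the left-hand side of \eqref{extensionT2} satisfies
\[
\|R_T v\|_{L^2(\mathbb{R}^d)} + h_T\|\nabla R_T v\|_{L^2(\mathbb{R}^d)} \simeq h_T^{d/2}\,\|R_{\hat T}\hat v\|_{H^1(\mathbb{R}^d)},
\]
while its right-hand side satisfies $\|v\|_{L^2(T)} + h_T\|\nabla v\|_{L^2(T)} \simeq h_T^{d/2}\,\|\hat v\|_{H^1(\hat T)}$.

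Finally I would chain these together: by the reference bound $\|R_{\hat T}\hat v\|_{H^1(\mathbb{R}^d)}\le C_{\hat T}\|\hat v\|_{H^1(\hat T)}$, and substituting the two scaling equivalences yields \eqref{extensionT2} with a constant $C$ depending only on $C_{\hat T}$ and the shape-regularity constant $\kappa$, hence independent of $T$ and $v$. The only genuinely nontrivial input is the classical extension theorem on the fixed domain $\hat T$, which I invoke as a black box. The main technical care in the argument lies in bookkeeping the powers of $h_T$ in the scaling equivalences and, in particular, in verifying from \eqref{shaperegularity} that all singular values of $B_T$ are comparable to $h_T$ (not merely that $|\det B_T|\simeq h_T^d$), so that the gradient scales by the single factor $h_T^{-1}$ uniformly in the size and orientation of $T$.
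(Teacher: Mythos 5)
Your proposal is correct and takes essentially the same route as the paper's appendix proof: pull $v$ back to the fixed reference simplex, apply the classical Stein extension there, push forward through the affine map $F_T$, and recover the $h_T$-scaling from the shape-regularity bounds $\|B_T\|\le C h_T$, $\|B_T^{-1}\|\le C h_T^{-1}$. The only cosmetic difference is that the paper's reference extension maps into $H_0^1(B_2)$ rather than all of $H^1(\mathbb{R}^d)$, which changes nothing in the estimates since the affine change of variables is a bijection of $\mathbb{R}^d$ in either case.
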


\subsection{ Proof of Lemma \ref{lemmatrace}}
Let $T \in \mathcal{T}_h$ and let $v \in H^1(T)$. Then, we have using Proposition \ref{traceprop}
\begin{equation*}
\begin{split}
\|v\|_{L^2(T \cap\Gamma)} &\le \|R_T v\|_{L^2(\Gamma)} \le C \|R_T v\|_{L^2(R^d)}^{1/2}  \| R_T v\|_{H^1(R^d)}^{1/2} \\
&\le  C \left(\|R_T v\|_{L^2(R^d)}^{1/2}   \|\nabla R_T v\|_{L^2(R^d)}^{1/2}+ \|R_T v\|_{L^2(R^d)}\right).
\end{split}
\end{equation*}
We apply the arithmetic-geometric mean inequality {and use $h_T\le h_0$} to get
\begin{equation*}
\|v\|_{L^2(T)} \le C ( h_T^{-1/2} \|R_T v\|_{L^2(R^d)}  + h_T^{1/2} \|\nabla R_T v\|_{L^2(R^d)}).
\end{equation*}
The result now follows after applying Lemma \ref{extensionT}.

\subsection{Proof of Lemma \ref{extensionT}}
We will denote the reference tetrahedra of unit size with a vertex at the origin $\hat{T}$. Then, we know (\cite{SteinBook}) there exists an extension operator from $R: H^1(\hat{T}) \rightarrow H_0^1(B_{2})$ such that
\begin{alignat}{1}
R \hat{v}= & \hat{v} \quad  \quad \text{ on } \hat{T} \label{refext1}\\
\|R \hat{v}\|_{H^1(B_2)} \le & C \|\hat{v}\|_{H^1(\hat{T})}.  \label{refext2}
\end{alignat}
Here $B_2$ is the ball with radius 2 centered at the origin.

Let $F_T: \hat{T} \rightarrow T$ be the onto affine mapping and has the form $F_T (\hat{x})= B \hat{x}+ b$. For any $v \in H^1(T)$ we can define $\hat{v} \in H^1(\hat{T})$ in the following way:
$\hat{v}(\hat{x})=v(F_T(\hat{x}))$.

Our desired extension will be given by
\begin{equation*}
(R_T v)(x)= (R \hat{v})(F^{-1}_T(x))
\end{equation*}
For notational convenience we use $w=R_T v$. Then, we see that $\hat{w}=R \hat{v}$. Using a change of variables formula we get
\begin{equation*}
\|\nabla w\|_{L^2(R^d)}^2= \int_{ F(B_2)}  |\nabla w(x)|^2  dx= \int_{B_2} |B^{-t} \nabla \hat{w}(\hat{x})|^2 |\text{det} B|  d\hat{x}.
\end{equation*}
Using that the mesh is shape regular we have (see \cite{ciarlet2002finite})
$
|B_{ij}| \le  C\, h_T $ ,$
|B_{ij}^{-1} | \le  C \, h_T^{-1}.
$
Therefore, we obtain
\begin{equation*}
\int_{B_2} |B^{-t} \nabla \hat{w}(\hat{x})|^2 |\text{det}| B  d\hat{x} \le C h_T^{d-2} \|\nabla \hat{w} \|_{L^2(B_2)}^2= C h_T^{d-2} \|\nabla R \hat{v} \|_{L^2(B_2)}^2.
\end{equation*}
Using \eqref{refext2} we obtain
\begin{equation*}
\|\nabla w\|_{L^2(R^d)}^2 \le C \, h_T^{d-2} (\|\hat{v}\|_{L^2(\hat{T})}^2+ \|\nabla \hat{v}\|_{L^2(\hat{T})}^2).
\end{equation*}
It is standard to show, again using a change of variable formula, and the bounds for $B$ and $B^{-1}$ above that
\begin{equation*}
 h_T^{d-2} (\|\hat{v}\|_{L^2(\hat{T})}^2+ \|\nabla \hat{v}\|_{L^2(\hat{T})}^2) \le C \, (h_T^{-2} \|v\|_{L^2(T)}^2+ \|\nabla v\|_{L^2(T)}^2).
\end{equation*}
Therefore, we have shown
\begin{equation*}
h_T \|\nabla R_T v\|_{L^2(R^d)} \le (\|v\|_{L^2(T)}+ h_T\|\nabla v\|_{L^2(T)}).
\end{equation*}
The bound for $\|R_T v\|_{L^2(R^d)}$ follows a similar argument.

\section{Proof of Lemma \ref{lemmaextension}}

Let $q \in Q_h$. Note that the extension $E_hq$ does not have to be continuous even if $Q_h= Q_h^{\rm cont}$.  Now for every $T \in \mathcal{T}_h$ we let $q_T^{\text{ext}} \in P^{k_p}(\mathbb{R}^d)$ be the natural  extension of $q_T \equiv q|_T$ onto the entire $\mathbb{R}^d$.

For $T \in \mathcal{T}_h^\Gamma$, we define $E_h q|_T= q_{K_T}^\text{ext}|_{T}$, where $K_T\in \Omega_h^i$ is given by assumption~\ref{assumption1} {(see the remark right below the assumption)}. %Note that there might be multiple $T$ but we can choose an arbitrary one.
Since $K_T\in W(T)~\Rightarrow~dist(K_T,T)\le C h_T$, it follows that
\[\|\nabla E_h q\|_{L^2(T)}=\|\nabla q_{K_T}^\text{ext}\|_{L^2(T)} \le C \|\nabla q\|_{L^2(K_T)}.\]
Hence, we have
\begin{equation}\label{inq201}
\sum_{T \in \mathcal{T}_h^\Gamma} h_T^2  \|\nabla E_h q\|_{L^2(T)}^2  \le C \, \sum_{T \in \mathcal{T}_h^i} h_T^2 \|\nabla q\|_{L^2(T)}^2.
\end{equation}

To bound the face terms, we let $F \in \mathcal{F}_h^\Gamma$ where $F = \partial T \cap \partial \widetilde{T}$. If we use the notation $K=K_T$ and $\widetilde{K}=K_{\widetilde{T}}$ belonging to $\mathcal{T}_h^i$ we have $E_h q|_T=q_K^{\text{ext}}|_T$ and $E_h q|_{\widetilde{T}}=q_{\widetilde{K}}^{\text{ext}}|_{\widetilde{T}}$.  Now 
 due to the assumption~\ref{assumption1b} there exists a sequence of tetrahedra $K=K_1, K_2, \ldots, K_M=\widetilde{K}$ all belonging to $\mathcal{T}_h^{i}$ where $K_i, K_{i+1} $ share a common face which we denote by $F_i$ {and}  the number $M$ is bounded and only depends on the shape regularity of the mesh.

First using inverse estimates we get
\begin{equation*}
h_F^{1/2} \|[E_h q]\|_{L^2(F)}=h_F^{1/2}\| q_{K_1}^\text{ext}-q_{K_M}^\text{ext}\|_{L^2(F)} \le  C \| q_{K_1}^\text{ext}-q_{K_M}^\text{ext}\|_{L^2(T)}
\end{equation*}
Easy to see that  since $K_1$ and $T$ belong to the same patch $W(T)$ that
\begin{equation*}
\| q_{K_1}^\text{ext}-q_{K_M}^\text{ext}\|_{L^2(T)} \le C \,   \| q_{K_1}^\text{ext}-q_{K_M}^\text{ext}\|_{L^2(K_1)}
\end{equation*}
Thanks to the triangle inequality we get
\begin{equation*}
\| q_{K_1}^\text{ext}-q_{K_M}^\text{ext}\|_{L^2(K_1)} \le \| q_{K_1}-q_{K_2}^\text{ext}\|_{L^2(K_1)}+\| q_{K_2}^\text{ext}-q_{K_M}^\text{ext}\|_{L^2(K_1)}.
\end{equation*}
Using equivalence of norms in finite dimensional case we obtain
\begin{equation*}
 \| q_{K_1}-q_{K_2}^\text{ext}\|_{L^2(K_1)}\le {C} \left(h_{F_1}^{1/2} \| [q]\|_{L^2(F_1)}+  h_{K_1} \| \nabla(q_{K_1}-q_{K_2}^\text{ext})\|_{L^2(K_1)}\right).
\end{equation*}
We also have
\begin{equation*}
\| q_{K_2}^\text{ext}-q_{K_M}^\text{ext}\|_{L^2(K_1)} \le  C\, \| q_{K_2}^\text{ext}-q_{K_M}^\text{ext}\|_{L^2(K_2)}.
\end{equation*}
So we get,
\begin{equation*}
\| q_{K_1}^\text{ext}-q_{K_M}^\text{ext}\|_{L^2(K_1)} \le {C} \left(h_{F_1}^{1/2} \| [q]\|_{L^2(F_1)}+  h_{K_1} \| \nabla(q_{K_1}-q_{K_2}^\text{ext})\|_{L^2(K_1)}\right)+\| q_{K_2}^\text{ext}-q_{K_M}^\text{ext}\|_{L^2(K_2)}.
\end{equation*}
If we continue this we will get
\begin{equation*}
\| q_{K_1}^\text{ext}-q_{K_M}^\text{ext}\|_{L^2(K_1)} \le  {C} \left(\sum_{j=1}^{M-1} h_{F_j}^{1/2} \| [q]\|_{L^2(F_j)}+\sum_{j=1}^{M-1} h_{K_j} \| \nabla(q_{K_j}-q_{K_{j+1}}^\text{ext})\|_{L^2(K_j)}\right).
\end{equation*}
Again, we see that
\begin{equation*}
\sum_{j=1}^{M-1} h_{K_j} \| \nabla(q_{K_j}-q_{K_{j+1}}^\text{ext})\|_{L^2(K_j)} \le {C}\sum_{j=1}^{M} h_{K_j}  \|\nabla q\|_{L^2(K_j)}.
\end{equation*}
Hence, we get
\begin{equation*}
h_F^{1/2} \|[E_h q]\|_{L^2(F)} \le C  \left(\sum_{j=1}^{M-1} h_{F_j}^{1/2} \| [q]\|_{L^2(F_j)}+ \sum_{j=1}^{M} h_{K_j}  \|\nabla q\|_{L^2(K_j)}\right).
\end{equation*}

%A similar argument works if $T \in \mathcal{T}_h^\Gamma$ and $\widetilde{T} \in \mathcal{T}_h^i$. In that case we let $\widetilde{K}=\widetilde{T}$ and same inequality above holds true.

If we now sum over $F \in \mathcal{F}_h^\Gamma$ we get
\begin{equation*}
\sum_{F \in \mathcal{F}_h^\Gamma}  h_F \|[E_h q]\|_{L^2(F)}^2 \le C \, \sum_{T \in \mathcal{T}_h^{i}} h_T^2 \|\nabla q\|_{L^2(T)}^2+ C \, \sum_{F \in \mathcal{F}_h^{i}}  h_F \|[ q]\|_{L^2(F)}^2.
\end{equation*}
The result now follows by combining this inequality with \eqref{inq201}.

\end{document}